\newcommand{\Z}{\mathbb{Z}}
\newcommand{\R}{\mathbb{R}}
\newcommand{\C}{\mathbb{C}}
\renewcommand{\ge}{\geqslant}
\newcommand*\mcE{\mathcal{E}}
\newcommand*\mcL{\mathcal{L}}
\newcommand*\mcR{\mathcal{R}}
\newcommand*\mfs{\mathfrak{s}}
\newcommand*\mcV{\mathcal{V}}
\newcommand*\mfu{\mathfrak{u}}
\newcommand*\mfz{\mathfrak{z}}
\newcommand*{\ra}{\rightarrow}
\newtheoremstyle{example}{\topsep}{\topsep}%
     {\itshape}%         Body font
     {}%         Indent amount (empty = no indent, \parindent = para indent)
     {\bfseries}% Thm head font
     {.}%        Punctuation after thm head
     {\newline}%     Space after thm head (\newline = linebreak)
     {\thmname{#1}\thmnumber{ #2}\thmnote{ #3}}%         Thm head spec
\theoremstyle{example}
\newtheorem{theorem}{Theorem}[section]
\newtheorem{proposition}[theorem]{Proposition}
\newtheorem{lemma}[theorem]{Lemma}
\newtheorem{definition}[theorem]{Definition}
\newtheoremstyle{remark}{\topsep}{\topsep}%
     {}%         Body font
     {}%         Indent amount (empty = no indent, \parindent = para indent)
     {\bfseries}% Thm head font
     {.}%        Punctuation after thm head
     {\newline}%     Space after thm head (\newline = linebreak)
     {\thmname{#1}\thmnumber{ #2}\thmnote{ #3}}%         Thm head spec
\theoremstyle{remark}
\newtheorem{remark}[theorem]{Remark}
\newtheorem{remarks}[theorem]{Remarks}
\newtheorem{notation}[theorem]{Notation}
\begin{document}
%************************************************************************************

\nocite{Arvanitoyeorgos03}
\nocite{Berard86}
\nocite{BGM71}
\nocite{Buser92}
\nocite{Chavel84}
\nocite{Gilkey95}
\nocite{Hatcher02}
\nocite{Docarmo92}
\nocite{Schueth03}
\nocite{Karcher89}

\title{isospectral metrics on projective spaces}
\author{Ralf R\"uckriemen}
\date{\today}

\begin{titlepage}
\maketitle

\begin{center}
Humboldt-Universit\"at zu Berlin \\
Mathematisch-Naturwissenschaftliche Fakult\"at II \\
Institut f\"ur Mathematik \\
\end{center}

\vspace{6.5cm}

\begin{tabular}{ll}
turned in by: & Ralf R\"uckriemen \\
born: & 31 december 1981	 in Berlin\\
supervisor: &Prof. Dr. Schueth \\
\end{tabular}

\vspace{1.5cm}

I would like to thank my supervisor Prof. Dr. Schueth for her extensive support, she always took her time to look at my work and made a lot of helpful and necessary corrections. I also want to thank my fellow students Julia Becker-Bender and Sebastian Wuttke for proofreading my thesis.

\end{titlepage}

\vspace*{1cm}

\begin{center}
\LARGE isospectral metrics on projective spaces
\end{center}

\vspace{1.5cm}

\begin{abstract}
We construct isospectral non isometric metrics on real and complex projective space. We recall the construction using isometric torus actions by Carolyn Gordon in chapter 2. In chapter 3 we will recall some facts about complex projective space. In chapter 4 we build the isospectral metrics. Chapter 5 is devoted to the non isometry proof of the metrics built in chapter 4. In chapter 6 isospectral metrics on real projective space are derived from metrics on the sphere.
\end{abstract}

\tableofcontents

\pagebreak
\section{Introduction}
%********************************************************************

The aim of this thesis is to extend the list of examples of
isospectral Riemannian manifolds by projective spaces. \\
Isospectral here means that the set of eigenvalues, including
multiplicities, of the Laplace operator acting on functions stays
the same. Note that the manifolds are also required to be non
isometric as isospectrality would be trivial otherwise. \\
At first we will elaborate a bit on why the problem of isospectrality is studied, 
where it belongs in a broader picture of differential geometry and what has been 
found out so far. \\
We will assume all manifolds to be compact, connected and without boundary throughout this paper. 
Let $f$ be a function on the manifold $M$, let $x_i$ be local coordinates, let $g=(g_{ij})$ denote the metric of the manifold in these coordinates and $g^{ij}$ the inverse matrix. Then the Laplace operator is defined to be \\
\begin{equation*}
\Delta f := -det(g_{ij})^{-\frac{1}{2}}\sum_{i,j}\frac{\partial}{\partial x_j}det(g_{ij})^{\frac{1}{2}}g^{ij}\frac{\partial}{\partial x_i}f 
\end{equation*}
If the metric is the standard metric $g_{ij}=\delta_{ij}$ this simplifies to the familiar Laplacian $\Delta f :=-\sum_{i} \frac{\partial^2}{\partial^2 x_i}f$.
Note that the Laplace operator is self adjoint, elliptic and positive definite.
It can be proven that the spectrum forms a discrete series starting at $\lambda_0=0$ tending 
to infinity. The multiplicity of each eigenvalue is finite. The eigenfunctions can be used to form an orthonormal base
 of all $C^{\infty}(M)$ functions. 
See for example \cite{Berard86} or \cite{Chavel84} for an introduction to the topic. \\
An explicit calculation of the spectrum of a manifold is possible only for very few special manifolds, 
principally the torus and the sphere with standard metric \cite{BGM71}. \\
Nevertheless there is a very close connection between a manifold and its spectrum. \\
$\lambda_0=0$ is always an eigenvalue with multiplicity one, its eigenfunctions are the constant funtions on $(M,g)$. 
There are several theorems estimating the first eigenvalue $\lambda_1$ from conditions on the curvature of the manifold and vice versa. An example is Lichnerowicz theorem, it states that for any closed manifold of dimension $n$ with $Ric \ge k(n-1)$ we have $\lambda_1 \ge nk$, here k is any positiv real number (see \cite{Chavel84}). \\
The spectrum of a manifold also determines a set of spectral invariants. A given eigenvalue spectrum requires the 
manifold to have a certain dimension, a certain volume and it fixes several curvature terms, the first being the total scalar 
curvature. An elegant way to see this is the asymptotic development of Minakshisundaram-Pleijel (see for example \cite{BGM71}) using a fundamental solution of the heat equation. This fundamental solution can be stated explicitly if the eigenvalues and eigenfunctions of the Laplace operator are known. Its asymptotic development for t tending to zero consists of a series of coefficients depending on the curvature of the manifold in a universal way. These coefficients are spectrally determined Riemannian invariants, the so-called heat invariants. \\
Using these and other similar results it can be shown that some special types of manifolds are spectrally determined. Manifolds 
of dimension two with zero curvature and round spheres up to dimension 6 are spectrally determined up to isometry. \cite{BGM71} \\
However, in general manifolds are not spectrally determined. The first examples of isospectral, non locally isometric manifolds were given by J. Milnor in 1964, a pair of tori in dimension 16 (see \cite{Milnor64}).
Various other examples have been found since then. They include
tori of lower dimensions, spheres \cite{Gordon01}, \cite{Schueth01b}, products of spheres and/or
tori \cite{Schueth99}, \cite{Schueth01a} and certain Lie groups \cite{Schueth01a}. We will construct examples with real and complex projective spaces in this paper. \\
Basically two techniques for the construction of isospectral metrics are known. The first is the so-called 'Sunada method'. The idea is to start with a Riemannian manifold and its isometry group. Sometimes it is possible to choose two subgroups of the isometry group with certain properties which guarantee that the manifolds obtained by dividing the Riemannian manifold by these subgroups are isospectral. The other method, found by Carolyn Gordon, uses a torus acting on two Riemannian manifolds. If the quotients of the manifolds by any subtorus are isospectral, then the original manifolds are isospectral, too. \\
In this thesis we will use the second method in a version from Schueth's paper \cite{Schueth01b}. For complex projective spaces, the main purpose of this thesis, an adaptation of Schueth's technique to this setting is needed, although the key principle of the non isometry proof rests the same. \\
We will explain the construction in a general setting in chapter 2. In the third chapter we define projective spaces and gather some facts about complex projective space. We will adapt the construction to our special case and build the isospectral metrics on $\C P^n$ in the fourth chapter. We will start out on the sphere and show that the construction of the metrics is in an appropriate sense compatible with the Hopf fibration. In the original construction a certain horizontality condition is destroyed by the Hopf fibration. However, there exists a general workaround to this problem. In chapter 5 we will prove that the metrics are not isometric . Due to the workaround used in chapter 4 we will need several lemmata to prove the metrics still behave on complex projective space in the specific way needed for the general nonisometry theorem from Schueth in \cite{Schueth01b}.
Isospectral metrics on real projective space are an almost direct corollary of those on spheres, as they are obtained by factoring out a discrete subgroup. This will be treated in chapter 6. 

\pagebreak
\section{Construction of isospectral metrics}
%******************************************************

\begin{notation}
\label{mhut}
Let {$T$} be a torus with a group structure making it a compact,
abelian Lie group. Let $\mfz$ denote its Lie algebra. If {$T$}
acts smoothly and effectively by isometries on a Riemannian
manifold $(M,g)$, then we denote by $\hat M$ the union of those
orbits on which $T$ acts freely. This action of {$T$} gives $\hat
M$ the structure of a principal $T$-bundle. By $g^T$ we denote the
unique Riemannian metric induced on $\hat M \slash T$ such that
the projection from $(\hat M, g)$ is a Riemannian submersion.
\end{notation}

\begin{theorem}
\label{isomF}
\cite{Schueth01b}
Let T be a torus which acts effectively on two compact Riemannian
manifolds $(M,g)$ and $(M',g')$ by isometries. For each subtorus
$W \subset T$ of codimension one, suppose that there exists a
T-equivariant diffeomorphism $F_W : M \ra M'$ which satisfies
$F^*_Wdvol_{g'}=dvol_g$ and induces an isometry $\bar F_W$ between
the quotient manifolds $(\hat M /W, g^W)$ and $(\hat M' /W,
g'^W)$. Then $(M,g)$ and $(M',g')$ are isospectral; if the
manifolds have boundary, then they are Dirichlet and Neumann
isospectral.
\end{theorem}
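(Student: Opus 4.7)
The plan is to decompose $L^2(M)$ and $L^2(M')$ by the $T$-action and to show, character by character, that the pullback by an appropriate $F_W$ unitarily intertwines the Laplacians. Since $T$ acts by isometries, $\Delta_g$ commutes with the induced unitary $T$-action on $L^2(M)$, so by Peter--Weyl we decompose into isotypic components
\[
L^2(M) \;=\; \bigoplus_\mu L^2(M)_\mu, \qquad L^2(M)_\mu = \{f: f(t\cdot x)=\mu(t)f(x)\ \forall t\in T\},
\]
indexed by the characters $\mu$ of $T$, with each summand $\Delta_g$-invariant; analogously for $M'$. The full spectrum of $\Delta_g$ is then the disjoint union, counted with multiplicity, of $\spec(\Delta_g|_{L^2(M)_\mu})$, so it suffices to produce, for every $\mu$, a unitary equivalence between $\Delta_g|_{L^2(M)_\mu}$ and $\Delta_{g'}|_{L^2(M')_\mu}$.

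For each $\mu$ I would pick a codimension-one subtorus $W_\mu \subset T$ on which $\mu$ is trivial: take the identity component of $\ker\mu$ when $\mu \neq 0$, and any such $W$ for $\mu=0$. The hypothesis provides a $T$-equivariant, volume-preserving diffeomorphism $F_\mu := F_{W_\mu}$ that induces an isometry $\bar F_\mu:(\hat M/W_\mu, g^{W_\mu}) \to (\hat M'/W_\mu, g'^{W_\mu})$. Then $F_\mu^*$ sends $L^2(M')_\mu$ into $L^2(M)_\mu$ by equivariance, is an $L^2$-isometry because $F_\mu^* dvol_{g'} = dvol_g$, and is invertible. The remaining and principal task is to verify that it intertwines the Laplacians.

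On the full-measure open set $\hat M$, any $f \in L^2(M)_\mu$ is $W_\mu$-invariant (since $\mu|_{W_\mu}=1$), so descends to a function on $\hat M/W_\mu$ that transforms under the residual circle $T/W_\mu$ by the character induced by $\mu$. Using the Riemannian submersion $(\hat M,g) \to (\hat M/W_\mu, g^{W_\mu})$ whose fibres are the $W_\mu$-orbits, $\Delta_g|_{L^2(M)_\mu}$ decomposes into a horizontal contribution coming from $\Delta_{g^{W_\mu}}$, a vertical or fibre contribution determined by the $T$-action together with the character $\mu$, and a mean-curvature correction arising from the fibre geometry. The horizontal part is matched by $F_\mu^*$ because $\bar F_\mu$ is an isometry; the vertical part is matched because $F_\mu$ is $T$-equivariant; and the mean-curvature term is matched because the identity $F_\mu^* dvol_{g'}=dvol_g$, combined with the base isometry, forces the fibre-volume elements, and hence the mean-curvature one-form of the submersion, to correspond under $F_\mu$. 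This yields $\Delta_g \circ F_\mu^* = F_\mu^* \circ \Delta_{g'}$ on $L^2(M')_\mu$; assembling over all $\mu$ gives isospectrality, and the same argument respects Dirichlet and Neumann boundary conditions because both the $T$-action and each $F_W$ preserve the boundary. The hard part is precisely this final step: disentangling the Laplacian on each isotypic piece cleanly enough that each of the three hypotheses (base isometry, equivariance, volume preservation) is visibly responsible for matching a specific geometric ingredient, with the volume condition carrying the otherwise-mysterious mean-curvature information.
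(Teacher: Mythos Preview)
Your character decomposition and the choice of $W_\mu$ for each $\mu$ is exactly the backbone of the paper's (i.e.\ Schueth's) argument. The difference lies in what you do on each isotypic piece. You try to intertwine the Laplacians directly, decomposing $\Delta_g$ along the $W_\mu$-submersion into horizontal, vertical and mean-curvature contributions. The paper instead works with the variational characterisation: it builds a linear bijection between the Sobolev spaces $H^{1,2}(M,g)$ and $H^{1,2}(M',g')$ which preserves both the $L^2$-norm and the Dirichlet energy $\int_M \|df\|_g^2\,dvol_g$, and then invokes the Rayleigh quotient.

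This is genuinely simpler, and the reason is worth noting. For $f\in L^2(M)_\mu$ with $\mu|_{W_\mu}=1$, the function $f$ is $W_\mu$-invariant, so $df$ is purely horizontal for the $W_\mu$-submersion and $\|df\|_g^2$ equals, pointwise, the pullback of $\|d\bar f\|_{g^{W_\mu}}^2$ from the quotient. The quotient isometry $\bar F_\mu$ therefore matches $\|df\|_g^2$ and $\|df'\|_{g'}^2$ as \emph{functions}, and the volume condition then matches their integrals. No second-order analysis, no mean-curvature term, and no separate ``vertical contribution'' (which is in fact zero here, since $f$ is constant on $W_\mu$-fibres) ever enters. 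Your direct route does work --- your observation that volume preservation plus the base isometry forces the fibre volumes, hence the mean-curvature one-form, to correspond is correct --- but it reproves at second order something that is transparent at first order. The variational approach also makes the boundary case immediate, since Dirichlet and Neumann spectra admit the same min--max description on the appropriate closed subspaces of $H^{1,2}$.
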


This theorem is a slight variation of Carolyn Gordon's isospectrality constructions through torus actions. The basic idea of the proof is as follows. One considers the Hilbert space $H^{1,2}(M,g)$. This is the completion of $C^{\infty}(M)$ with respect to the norm:
\begin{equation*} 
||f||^2_{H^{1,2}(M,g)}=\int_M |f|^2 dvol_g + \int_M||df||^2_g dvol_g 
\end{equation*}
One can then give a variational characterisation of the eigenvalues through the Rayleigh quotient defined as: 
\begin{equation*} 
\mcR(f)=\int_M||df||^2_g dvol_g \Big\slash \int_M |f|^2 dvol_g
\end{equation*}  
The main part of the proof is constructing an isometry between the two Hilbert spaces $H^{1,2}(M,g)$ and $H^{1,2}(M',g')$ that preserves $L^2$-norms. The existence of this isometry implies that the eigenvalues of $(M,g)$ and $(M',g')$ are equal. See \cite{Schueth01b} for a complete proof.

\pagebreak

\begin{notation}
\label{VF*}
\begin{tabular}{r p{12.5cm}}
\textup{1.} & Let $T=(\R / 2 \pi \Z) \times (\R / 2 \pi \Z)$ be the standard 2-torus and let $\mcL= 2 \pi \Z \times 2 \pi \Z$ be the associated lattice. Let $\mcL^*$ denote the dual lattice. \\
&\\
\textup{2.} & Let $exp: \mfz \ra T$ be the standard cover, i.e. the exponential map from the Lie algebra $\mfz$ to the Lie group $T$. For $Z \in \mfz$ we denote by $Z^*$ the vector field $p \mapsto \frac{d}{dt} \big|_{t=0}exp(tZ)p$ on $M$. This is the infinitesimal flow induced by the torus action. We will denote the set of all $Z^*$ by $\mfz^*$.\\
\end{tabular}
\end{notation}

\begin{definition}
\label{admissible}
Let $\lambda$ denote a 1-form on $M$ with values in $\mfz$. 
\begin{enumerate}
\item We call a 1-form $\lambda$ admissible iff
\begin{enumerate}
\item it is T-invariant, and 
\item it is horizontal. That is, $\lambda$ vanishes on the tangent spaces of the orbits of the $T$ action on
$M$ or put in an equation we have $\lambda(U)=0 \textit{ } \forall U \in \mcV(M)$
with $U \in span\{Z_1^*, Z_2^* \}$. Here $\mcV(M)$ denotes the set of all vector fields on $M$. 
\end{enumerate}
\item For any admissible 1-form $\lambda$ we define a Riemannian metric
$g_{\lambda}$ on $(M, g_0)$ by: \\
\begin{equation*}
g_{\lambda}(X,Y) := g_0(X + \lambda(X)^*, Y + \lambda(Y)^*)
\end{equation*}
\end{enumerate}
\end{definition}

\begin{remarks}
For any admissible 1-form $\lambda$ the map $X \mapsto \lambda(X)^*$ is two-step nilpotent. This implies $dvol_{g_{\lambda}}=dvol_{g_0}$, that is the new metrics have the same volume element as $g_0$.
\end{remarks}

\begin{proposition}
\label{riemsubmersion}
Let the torus $T$ act isometrically on $(M,g_0)$ and let $\lambda$ denote an admissible 1-form. \\
Then $T$ acts isometrically on $(M, g_{\lambda})$ and the Riemannian submersion metrics $g_0^T$ and $g_{\lambda}^T$ on $\hat{M} \slash T$ are equal.
\end{proposition}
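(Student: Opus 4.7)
The plan is to verify the two assertions in succession, with admissibility of $\lambda$ providing exactly what is needed in each.

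For the first assertion, I would show that for every $p \in T$ the differential of the action $L_p: M \to M$ satisfies $(L_p)_*\bigl(X + \lambda(X)^*\bigr) = (L_p)_*X + \lambda\bigl((L_p)_*X\bigr)^*$. Two ingredients feed into this identity: first, the $T$-invariance of $\lambda$ (part (a) of admissibility), which says $\lambda_{L_pq}\bigl((L_p)_*X_q\bigr) = \lambda_q(X_q)$ as elements of $\mfz$; and second, the $T$-invariance of each fundamental vector field $Z^*$, which follows from $T$ being abelian, giving $(L_p)_*Z^*_q = Z^*_{L_pq}$. Combining these with the fact that $L_p$ is already a $g_0$-isometry yields
\begin{equation*}
g_\lambda\bigl((L_p)_*X,(L_p)_*Y\bigr) = g_0\bigl((L_p)_*(X+\lambda(X)^*),(L_p)_*(Y+\lambda(Y)^*)\bigr) = g_\lambda(X,Y).
\end{equation*}

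For the second assertion, I would identify the $g_\lambda$-horizontal lift of a tangent vector $\bar X$ on $\hat M /T$ starting from its $g_0$-horizontal lift $X$. Using that $\lambda(Z^*)=0$ for every $Z^* \in \mfz^*$ (part (b) of admissibility), a direct computation gives
\begin{equation*}
g_\lambda\bigl(X - \lambda(X)^*, Z^*\bigr) = g_0\bigl(X - \lambda(X)^* + \lambda(X)^*,\, Z^*\bigr) = g_0(X, Z^*) = 0,
\end{equation*}
so $X - \lambda(X)^*$ is $g_\lambda$-horizontal, and it obviously projects to $\bar X$. Therefore
\begin{equation*}
g_\lambda^T(\bar X,\bar Y) = g_\lambda\bigl(X - \lambda(X)^*,\, Y - \lambda(Y)^*\bigr) = g_0\bigl(X,Y\bigr) = g_0^T(\bar X,\bar Y),
\end{equation*}
where in the middle step I again use $\lambda(\lambda(X)^*)=0$ to cancel the correction terms.

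I do not foresee a serious obstacle, since admissibility is tailor-made to make these two calculations close: $T$-invariance of $\lambda$ carries the $T$-action through the definition of $g_\lambda$, while horizontality of $\lambda$ ensures that the correction $X \mapsto X+\lambda(X)^*$ is purely vertical and therefore invisible after passing to the quotient. The only point requiring a little care is distinguishing the two notions of horizontality ($g_0$-horizontal tangent vectors versus the horizontality of $\lambda$ as a differential form), and using each in the correct place.
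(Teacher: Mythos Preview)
Your proposal is correct and follows essentially the same approach as the paper: both argue $T$-invariance of $g_\lambda$ from $T$-invariance of $\lambda$ and $g_0$, and both identify $X-\lambda(X)^*$ as the $g_\lambda$-horizontal lift of $\bar X$ and compute $g_\lambda(X-\lambda(X)^*,Y-\lambda(Y)^*)=g_0(X,Y)$ using $\lambda(\lambda(X)^*)=0$. Your treatment of the first assertion is slightly more explicit than the paper's (which simply states that $T$-invariance of $\lambda$ and $g_0$ implies $T$-invariance of $g_\lambda$), but the substance is identical.
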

\begin{proof}
As $\lambda$ is T-invariant and the metric $g_0$ is T-invariant the metric $g_{\lambda}$ is T-invariant, too; thus the torus acts isometrically on $(M, g_{\lambda})$. Thus there exist unique metrics $g_0^T$ and $g_{\lambda}^T$ on $\hat{M} \slash T$ such that both  projections are Riemannian submersions. \\
We have that $g_{\lambda}$ restricts to the same metric as $g_0$ on the space spanned by $Z_1^*$ and $Z_2^*$ as $\lambda$ vanishes on these vector fields. Let $\pi$ denote the projection from $\hat{M}$ to $\hat{M} \slash T$. Then $ker(d \pi_p)$ is a subspace of $T_p\hat{M}$. A submersion is called a Riemannian submersion if the isomorphic map $d \pi_p:  (ker(d \pi_p))^{\bot_g} \ra T_{\pi(p)}(\hat{M} \slash T)$ is an isometry. \\
That means the assertion is equal to saying that the scalar product $g_0$ restricted to vectors that are $g_0$-orthogonal to $\mfz^*$ is the same scalar product as $g_{\lambda}$ restricted to vectors that are $g_{\lambda}$-orthogonal to $\mfz^*$. By definition of $g_{\lambda}$ we have that if $X$ is $g_0$-orthogonal to $\mfz^*$, then $X-\lambda(X)^*$ is $g_{\lambda}$-orthogonal to $\mfz^*$ and $d \pi _p(X)=d \pi_p(X-\lambda(X)^*)$. Let $X$ and $Y$ be two $g_0$-horizontal vector fields. Then we have:
\begin{eqnarray*}
&g_{\lambda}(X-\lambda(X)^*, Y-\lambda(Y)^*) \\
=&g_{0}(X-\lambda(X)^*+\lambda(X-\lambda(X)^*)^*, Y-\lambda(Y)^*+\lambda(Y-\lambda(Y)^*)^*) \\
=&g_{0}(X, Y) \\
\end{eqnarray*}
as the map $X \mapsto \lambda(X)^*$ is linear and two-step nilpotent. This implies that the metrics $g_0^T$ and $g_{\lambda}^T$ are equal.
\end{proof}

\begin{theorem}
\cite{Schueth01b}
\label{isomlambda}
Let $\lambda$, $\lambda'$ be two admissible $\mfz$-valued 1-forms
on M. Assume: \\
\begin{enumerate}
\item[$(I)$] For every $\mu \in \mcL^*$ there exists a T-equivariant $F_{\mu}
\in Isom(M,g_0)$ which satisfies $\mu \circ \lambda = F_{\mu}^*(\mu \circ \lambda')$.
\end{enumerate}
Then $(M, g_{\lambda})$ and $(M', g_{\lambda'})$ are isospectral.
\end{theorem}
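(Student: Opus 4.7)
My plan is to invoke Theorem \ref{isomF} with $(M, g) = (M, g_\lambda)$ and $(M', g') = (M, g_{\lambda'})$. For each codimension-one subtorus $W \subset T$, I must produce a $T$-equivariant diffeomorphism $F_W : M \to M$ that preserves the volume element and induces an isometry on the $W$-quotients. The natural candidate is the following: the Lie algebra $\mathfrak{w}$ of $W$ is a rational hyperplane in $\mfz$, so I pick a primitive $\mu \in \mcL^*$ with $\ker \mu = \mathfrak{w}$ and set $F_W := F_\mu$ from hypothesis $(I)$. By assumption $F_\mu$ is $T$-equivariant and an isometry of $g_0$; together with the Remark after Definition \ref{admissible} that $dvol_{g_\lambda} = dvol_{g_{\lambda'}} = dvol_{g_0}$, this immediately gives the volume condition.

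Next I address the isometry of quotients in two stages. \emph{Stage A} is a direct calculation: using the $T$-equivariance of $F_\mu$ (which yields $dF_\mu(Z^*_p) = Z^*_{F_\mu(p)}$ for $Z \in \mfz$) together with the fact that $F_\mu$ is a $g_0$-isometry, one obtains the pullback formula $F_\mu^* g_{\lambda'} = g_{F_\mu^* \lambda'}$. Setting $\lambda'' := F_\mu^* \lambda'$, which is again admissible since $F_\mu$ preserves both $T$-invariance and horizontality, this shows that $F_\mu$ is an honest isometry $(M, g_{\lambda''}) \to (M, g_{\lambda'})$. In particular $\bar F_\mu$ is an isometry $(\hat M/W, g_{\lambda''}^W) \to (\hat M/W, g_{\lambda'}^W)$.

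\emph{Stage B} is the substantive claim: $g_\lambda^W = g_{\lambda''}^W$. Hypothesis $(I)$ gives $\mu \circ \lambda = F_\mu^*(\mu \circ \lambda') = \mu \circ \lambda''$, hence $\eta := \lambda - \lambda''$ is an admissible 1-form with values in $\ker \mu = \mathfrak{w}$. The goal thus reduces to a horizontal-shift lemma: for any admissible $\mathfrak{w}$-valued 1-form $\eta$, the Riemannian submersion metrics $g_\lambda^W$ and $g_{\lambda-\eta}^W$ on $\hat M/W$ coincide. My approach is to introduce the fibrewise map $\phi: T\hat M \to T\hat M$, $\phi(X) = X + \eta(X)^*$, which is the identity modulo $\mathfrak{w}^*$ since $\eta(X)^* \in \mathfrak{w}^*$. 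One then verifies, using the horizontality of $\lambda$ and the two-step nilpotency of $X \mapsto \lambda(X)^*$, that $\phi$ sends $g_\lambda$-$W$-horizontal vectors to $g_{\lambda-\eta}$-$W$-horizontal ones and that $g_{\lambda-\eta}(\phi X, \phi Y)$ collapses to $g_0(X + \lambda(X)^*, Y + \lambda(Y)^*) = g_\lambda(X, Y)$. Chaining this with Stage A gives $\bar F_W$ as the required isometry, and Theorem \ref{isomF} then yields isospectrality. The main obstacle is precisely this horizontal-shift lemma, where the conjunction of admissibility, $\mathfrak{w}$-valuedness of $\eta$, and nilpotency must be used just so, in order that $\phi$ simultaneously correct the horizontal distribution and leave the induced inner product invariant.
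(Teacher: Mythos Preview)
Your proposal is correct and follows exactly the approach the paper indicates: apply Theorem~\ref{isomF} by choosing, for each codimension-one subtorus $W$, a primitive $\mu\in\mcL^*$ with $\ker\mu=\mathfrak{w}$ and setting $F_W:=F_\mu$. The paper's own proof is merely this one-line reduction (with details deferred to \cite{Schueth01b}); your Stages~A and~B supply precisely the verification that $F_\mu$ meets the hypotheses of Theorem~\ref{isomF}, and your horizontal-shift lemma is the natural $W$-analogue of Proposition~\ref{riemsubmersion}.
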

This theorem is an application of theorem \ref{isomF}. For every subtorus $W$ of codimension one in $T$ choose $\mu \in \mcL^*$ such that the Lie algebra of W is exactly the kernel of $\mu$. The isometry $F_{\mu}$ is then the map $F_W$ from theorem \ref{isomF}. \\

We will now describe how this machinery is realized on odd dimensional spheres. We will adapt this to complex projective spaces in later chapters.

\begin{notation}
Let 
\begin{equation*}
SU(m)=\{ A \in Gl(m,\C) \mid A \bar A ^{T}=1, det(A)=1 \}
\end{equation*}
denote the Lie group of unitary complex matrices and let 
\begin{equation*}
\mfs \mfu (m)= \{ A \in M(m, \C) \mid A + \bar A ^{T}=0, tr(A)=0 \}
\end{equation*}
denote its Lie algebra. We remind that the torus $T$ is also a Lie group and its Lie algebra is denoted by $\mfz$. See \cite{Arvanitoyeorgos03} for an introduction to Lie groups and Lie algebras.
\end{notation}

\begin{remark}
\label{torusiso}
We construct admissible 1-forms $\lambda$, $\lambda'$ on $S^{2n+1}$ using suitable linear
maps $j,j': \mfz \ra \mfs \mfu (n-1)$. We write $j_Z$ for $j(Z)$ for the sake of convenience. With the inclusion $SU(n-1) \times T = SU(n-1) \times U(1) \times U(1) \subset U(n+1)$ we get a canonical action of $SU(n-1)$ and of {$T$} on $\C^{n+1}$. These two actions commute and preserve the sphere $S^{2n+1}$ embedded in $\C^{n+1}$. \\ 
Let $Z_1$ and $Z_2$ denote the standard base of $\mfz \cong \R^2$. Then the action of the torus on a point $p=(q,r,s) \in \C^{n-1} \times \C \times \C \cong \C^{n+1}$ is given by \\
\begin{equation*}
exp(aZ_1 +bZ_2): (q,r,s) \mapsto (q, e^{ia}r, e^{ib}s)
\end{equation*}
Both the torus action and the action of $SU(n-1)$ induce isometries on the sphere $(S^{2n+1}, g_{eucl})$.
The 1-forms $\lambda$, $\lambda'$ are then defined to be $\lambda^k(X)_{p}:=\langle j_{Z_k}p, X \rangle$, for $k=1,2$, and similarly for $\lambda'$. Here $j_{Z_k}p$ is understood as $(j_{Z_k}q, 0, 0)$, if $p=(q,r,s)$, that is, the last two components of $p$ and $X$ play no role. \\
The scalar product $\langle , \rangle$ denotes the standard hermitian scalar product on $\C^{n+1}$. Explicitly we have $\langle X,Y \rangle := \sum_i Re (X_i \overline{Y_i})$. The associated metric on $S^{2n+1}$ is the standard euclidian metric, it is denoted by $g_{eucl}$.\\
The operation of a matrix in $\mfs \mfu (n-1)$ on a point of the the sphere $S^{2n+1}$ is defined via the Lie group $SU(n-1)$. Consider a curve $\gamma(t)$ in $SU(n-1)$ with $\gamma(0)=id$ and $\dot{\gamma}(0) =j_{Z_k} \in \mfs \mfu (n-1)$. Then $\gamma(t)p$ is a curve on $S^{2n+1}$, its differential at zero lies in $T_pS^{2n+1}$ and is written as $j_{Z_k}p$. This definition is independent of the curve $\gamma$ chosen, the curve $\gamma(t)=exp(tj_{Z_k})$ would be a possible choice. \\
\end{remark}

\begin{remark}
\label{deflambda}
The 1-form $\lambda$ on the sphere defined as $\lambda^k(X)_{p}:=\langle j_{Z_k}p, X \rangle$, for $k=1,2$, is admissible as in definition \ref{admissible}. In fact it is $T$-invariant as $\lambda$ depends only on the $q$ components of $p$ and $T$ acts only on the $r$ and $s$ components. We also have $\lambda(U)=0$ for all $U \in \mfz ^*$ since $j_Zp=(j_Zq,0,0)$ is orthogonal to $(0,ir,0)$ and $(0,0,is)$. 
\end{remark}

\begin{definition}
\label{defj}
Let $j,j': \mfz \cong \R^2 \ra \mfs \mfu (m)$ be two linear maps.  
\begin{enumerate}
\item We call $j$ and $j'$ isospectral, denoted $j \sim j'$, iff
$\forall Z \in \mfz \exists A_Z \in SU(m)$ such that
$j'_Z=A_Zj_ZA_Z^{-1}$. This implies that for each $Z \in \mfz$, $j_Z$ and $j'_Z$ have the same eigenvalues.

\item Let $Q: \C^{m} \ra \C^{m}$ denote complex conjugation. We call
$j$ and $j'$ equivalent, denoted $j \cong j'$, iff there exists $A
\in SU(m) \cup SU(m)\circ Q$ and $\Psi \in \mcE$ such that
$j'_Z=Aj_{\Psi(Z)}A^{-1}$ for all $Z \in \mfz$. Here $\mcE$ is the group of all automorphisms of $\mfz$ which preserve the set $\{ \pm Z_1, \pm Z_2 \}$.

\item We say $j$ is generic iff no nonzero element of $\mfs \mfu
(m)$ commutes with both $j_{Z_1}$ and $j_{Z_2}$.

\end{enumerate}
\end{definition}

The first property will, as its name suggests, guarantee isospectrality of the constructed metrics. We will use the matrices $A_Z$ to construct the isometries $F_{\mu}$ from condition $(I)$ in theorem \ref{isomlambda}.
Non equivalence and genericity will be used in the non isometry proof. Note that any isometry of $\C P^{m-1}$ is induced by a matrix $A$ as used in the non equivalence definition. We will give the complete isometry group of $\C P^{m-1}$ in the appropriate chapter.\\

We will check non equivalence by evaluating the term {$tr((j_{Z_1}^2+j_{Z_2}^2)^2)$}.
\begin{lemma}
If 
\begin{equation*}
tr((j_{Z_1}^2+j_{Z_2}^2)^2) \neq tr((j'^2_{Z_1}+j'^2_{Z_2})^2)
\end{equation*}
then $j$ and $j'$ are not equivalent.
\end{lemma}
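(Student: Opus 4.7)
The plan is to prove the contrapositive: assuming $j \cong j'$, I will show the two traces agree.

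By the definition of equivalence, we have $A \in SU(m) \cup SU(m)\circ Q$ and $\Psi \in \mcE$ with $j'_Z = A j_{\Psi(Z)} A^{-1}$ for all $Z \in \mfz$. Squaring this relation and using that conjugation by $A$ is multiplicative gives
\begin{equation*}
{j'_Z}^{\!2} = A\, j_{\Psi(Z)}^{2}\, A^{-1}.
\end{equation*}

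The first key observation is that the pair $\{j_{\Psi(Z_1)}^2, j_{\Psi(Z_2)}^2\}$ coincides with $\{j_{Z_1}^2, j_{Z_2}^2\}$. Indeed, $\Psi$ is an automorphism of $\mfz$ that permutes the set $\{\pm Z_1, \pm Z_2\}$, so as an unordered pair $\{\Psi(Z_1), \Psi(Z_2)\}$ equals $\{\pm Z_1, \pm Z_2\}$ (possibly after swap). Since $j$ is linear, $j_{-Z} = -j_Z$, hence $j_{-Z}^2 = j_Z^2$; signs disappear upon squaring. Therefore
\begin{equation*}
j_{\Psi(Z_1)}^2 + j_{\Psi(Z_2)}^2 = j_{Z_1}^2 + j_{Z_2}^2,
\end{equation*}
which yields ${j'_{Z_1}}^{\!2} + {j'_{Z_2}}^{\!2} = A\bigl(j_{Z_1}^2 + j_{Z_2}^2\bigr) A^{-1}$, and squaring once more
\begin{equation*}
\bigl({j'_{Z_1}}^{\!2} + {j'_{Z_2}}^{\!2}\bigr)^{\!2} = A\bigl(j_{Z_1}^2 + j_{Z_2}^2\bigr)^{\!2} A^{-1}.
\end{equation*}

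If $A \in SU(m)$, we are done immediately by invariance of the trace under conjugation. The remaining case, where $A = B \circ Q$ with $B \in SU(m)$ and $Q$ complex conjugation, is the only real obstacle. The conjugation by $Q$ replaces the matrix by its complex conjugate, so after one more trace-invariant conjugation by $B$ we obtain
\begin{equation*}
\tr\bigl(({j'_{Z_1}}^{\!2} + {j'_{Z_2}}^{\!2})^{2}\bigr) = \tr\bigl(\overline{(j_{Z_1}^2 + j_{Z_2}^2)^{2}}\bigr) = \overline{\tr\bigl((j_{Z_1}^2 + j_{Z_2}^2)^{2}\bigr)}.
\end{equation*}
To finish I would argue that this trace is already real. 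Since $j_{Z} \in \mfs\mfu(m)$ is skew-Hermitian, $j_Z^2$ is Hermitian, and sums and squares of Hermitian matrices are Hermitian; hence $(j_{Z_1}^2+j_{Z_2}^2)^2$ is Hermitian and its trace lies in $\R$. Thus the traces coincide in this case as well, completing the contrapositive and proving the lemma.
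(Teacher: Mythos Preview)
Your proof is correct and follows essentially the same contrapositive computation as the paper. In fact you are more careful: the paper's chain of equalities treats $A$ uniformly as a matrix and invokes trace invariance under conjugation without separately addressing the case $A\in SU(m)\circ Q$, whereas you close that gap by observing that $(j_{Z_1}^2+j_{Z_2}^2)^2$ is Hermitian and hence has real trace.
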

\begin{proof}
We will use an indirect proof. If $j \cong j'$, we have:
\begin{eqnarray*}
tr((j'^2_{Z_1}+j'^2_{Z_2})^2) =& tr(((Aj_{\Psi(Z_1)}A^{-1})^2+(Aj_{\Psi(Z_2)}A^{-1})^2)^2) \\
=& tr((Aj^2_{\Psi(Z_1)}A^{-1}+Aj^2_{\Psi(Z_2)}A^{-1})^2) \\
=& tr((A(j^2_{\Psi(Z_1)}+j^2_{\Psi(Z_2)})A ^{-1})^2) \\
=& tr((A(j^2_{Z_1}+j^2_{Z_2})A ^{-1})^2) \\
=& tr(A(j^2_{Z_1}+j^2_{Z_2})^2A ^{-1}) \\
=& tr((j^2_{Z_1}+j^2_{Z_2})^2)
\end{eqnarray*}
Turning this the other way around, we get that if {$tr((j_{Z_1}^2+j_{Z_2}^2)^2)\neq tr((j'^2_{Z_1}+j'^2_{Z_2})^2)$}, then $j$ and $j'$ are not equivalent. \\
\end{proof}

What we now need are pairs of maps $j$, $j'$ that are isospectral, non equivalent and
generic. The necessity of their existence is our main dimension barrier. We want $SU(m)$ to act isometrically on the manifold and an additional 2-torus whose isometric action commutes with $SU(m)$. \\
A simple calculation shows no such maps exist for $m=1,2$. On the other hand there exist not only pairs but even continuous families of such maps $j(t)$ for any dimension larger or equal than 3.  The proof can be found in \cite{Schueth01b}, Prop 3.2.6. or in \cite{Schueth01a}, Prop 3.6. An explicit example for such a family with $m=3$
is: \\

\begin{eqnarray*}
j_{Z_1}(t):=&
\begin{pmatrix}
4i & 0 & 0 \\
0 & i & 0 \\
0 & 0& -5i
\end{pmatrix} 
j_{Z_2}(t):=&
\begin{pmatrix}
4i & 0 & 0 \\
0 & i & 0 \\
0 & 0& -5i
\end{pmatrix} 
\end{eqnarray*}
A direct computation shows that 
\begin{equation*}
 det(\lambda \cdot Id-(aj_{Z_1}(t)+bj_{Z_2}(t)))
= \lambda^3+(3a^2+21b^2)\lambda -3ia^2b-20ib^3
\end{equation*}
is independent of $t$. In other words for any $Z=aZ_1+bZ_2$ we have that all $j_Z(t)$ are conjugate to each other because they have the same eigenvalues. On the other hand 
\begin{equation*}
tr((j_{Z_1}(t)^2+j_{Z_2}(t)^2)^2)=1038+108\cos^2(t)
\end{equation*}
clearly depends on $t$. We also have $j(t)$ is generic for all $t$ with $\sin(t) \neq 0$. \\

\pagebreak
\section{Projective spaces in general and $\C P^n$ in particular}
%***********************************************************

\begin{definition}
For any field $F$ the set of all lines in the vector space $F^{n+1}$ is called the n-dimensional projective space of that field. If $F$ is the field of real, complex or quaternionic numbers, this is a manifold. It can be written as the quotient space $F^{n+1} \slash F^*$, where $F^*=F \backslash \{0\}$.
\end{definition}

\begin{remark}
Projective spaces have some interesting geometric properties. They were one of the first examples of non euclidean geometry to be discovered. There is no notion of lines being parallel, instead we have a theorem stating that any two distinct lines in a common plane intersect in exactly one point. See for example \cite{coxeter74} for an introduction to projective geometry.
\end{remark}

\begin{remark}
In the case of real, complex or quaternionic projective space the quotient space can be 'simplified' by dividing through the norm. We restrict to the unit sphere, in the complex case we get $\C P^n= S^{2n+1} / S^1$. This is known as the Hopf fibration. We will denote by $\Pi$ the projection map $\Pi :S^{2n+1} \ra \C P^n$. A point $[p] \in \C P^n$ is then the equivalence class of a point $p \in S^{2n+1} \subset \C^{n+1} \cong \R^{2n+2}$ modulo multiplication with $\tau \in S^1 \subset \C$. Similarly real projective space can be seen as a sphere with opposite points identified.
\end{remark}

\begin{remark}
\cite{Hatcher02} Complex projective space can be given the structure of a CW-complex. This is particulary useful to compute its (real) homology. We can obtain $\C P^n$ also as a quotient space of the disk $D^{2n}$ under the identifications $p \sim \tau p$ for $p \in \partial D^{2n} =S^{2n-1}$ and $\tau$ a complex number with norm one. We have $\C P^n= S^{2n+1} / S^1$, if we choose representatives in $S^{2n+1}$ where the first coordinate is real and non negative we get a one-to-one correspondence between points in $\C P^n$ with first coordinate non zero and points in the interior of the disk $D^{2n}$. The border of $D^{2n}$ corresponds to the points in $S^{2n+1}$ with first coordinate zero, so we still need the identification $p \sim \tau p$ there. This description implies that $\C P^n$ can be obtained from $\C P^{n-1}$ by attaching a $2n$-dimensional cell. Inductively we obtain that the cell structure of $\C P^n$ consists of exactly one cell in every even dimension up to $2n$. \\
Thus we have proven that $\C P^n$ seen as a CW-complex has no two cells in adjacent dimensions. This means that all homology border maps are zero and the cells are in one-to-one correspondence with the generators of the homology groups. Thus we get:
\begin{equation*}
H_i(\C P^n) \cong  \begin{array}{cl} \Z & \textit{for } i=0,2,4,\hdots , 2n \\ 0 & otherwise  \end{array}
\end{equation*}
\end{remark}

\begin{remark}
Complex projective space is a complex manifold, one atlas is the set of charts $\varphi_j : \C^n \ra \C P^n$ where every point in $\C^n$ is identified with the point $[ \hdots, 1, \hdots]$ in $\C P^n$ with the $1$ on the j-th position, $j$ ranges form $0$ to $n$ here. The transitions between two maps are holomorphic. The complex structure $J: T_{[p]} M \ra T_{[p]} M$, satisfying $J^2= -id|_{T_{[p]} M}$, is given by multiplication with the complex number $i$.\\
\end{remark}

\begin{remark}
\label{FubiniStudy}
With the realization of complex projective space as a quotient space we also get a naturally induced metric. We start with the euclidian metric $g_{eucl}$ on the sphere (induced from the euclidian metric on $\R ^{2n}$) and then take the unique metric such that the Hopf fibration is a Riemannian submersion. For complex projective space this metric is called Fubini-Study metric, denoted by $g_{FS}$. Within the domain of one chart it can be written explicitly in the following way. \\
Let $[p]=[p_0, p_1, \hdots,p_n ]$ denote a point in $\C P^n$. We will use the chart $\varphi_0$ so we have $p_0=1$. Let $\bar p_i$ denote the complex conjugate, then $g_{FS}$ can be written as
\begin{eqnarray*}
g_{FS}=(g_{i\bar{j}})_{i\bar{j}}=& (\frac{\partial^2}{\partial p_i \partial \bar{p}_j} log(1+|p|^2))_{i\bar{j}} \\
=& \sum\limits_i \frac{dp_i \wedge d\bar{p}_i}{1+|p|^2}- \sum\limits_{i,\bar{j}}\frac{p_i dp_i \wedge \bar{p}_j d\bar{p}_j}{(1+|p|^2)^2}
\end{eqnarray*}
\end{remark}

\begin{remark}
$\C P^n$ admits K\"ahler metrics, we will prove here that the Fubini-Study metric is an example. We have to show that the associated K\"ahler-form $\omega(X,Y):=g_{FS}(JX,Y)$ is closed. This can be seen by a straightforward calculation of $d\omega$ in one of the charts. We will denote by $\partial=\sum\limits_i\frac{\partial}{\partial p_i}$ and $\bar{\partial}=\sum\limits_j\frac{\partial}{\partial \bar{p}_j}$ the familiar operators in the special case of zero forms, i.e. functions. Then we have 
\begin{eqnarray*}
\omega=& i \partial \bar{\partial}log(1+|p|^2) \\
d\omega =& di \partial \bar{\partial}log(1+|p|^2) \\
=&i(\partial + \bar{\partial}) \partial \bar{\partial}log(1+|p|^2) \\
=&i \bar{\partial} \partial \bar{\partial}log(1+|p|^2) \\
=&-i \partial \bar{\partial}  \bar{\partial}log(1+|p|^2) \\
=&0
\end{eqnarray*}
Here we used $d=\partial + \bar{\partial}$, $\partial^2=0$, $\bar{\partial}^2=0$ and $\partial \bar{\partial} +  \bar{\partial}\partial =0$.
\end{remark}

\begin{remark}
\label{isometrie}
Any unitary map $U: \C^{n+1} \ra \C^{n+1}$ induces an isometry on the sphere $S^{2n+1}$ embedded in $\C^{n+1}$. The Hopf circle of a given point $p \in S^{2n+1}$ consists of the points $\{ \tau p \mid \tau \in \C, |\tau |=1 \}$. $U$ and $\tau \cdot id$ commute because the center of the group of unitary matrices is given exactly by complex multiples of the identity. We thus have that $U$ maps Hopf circles to Hopf circles, therefore it gives an isometry of $\C P^{n}$. Complex conjugation also maps Hopf circles to Hopf circles. Let $Q$ denote complex conjugation. Then we have additional isometries of $\C P^{n}$ of the form $U \circ Q$. In fact all isometries of complex projective space arise this way, see \cite{Karcher89}. \\
This construction does not give the isometry group of $\C P^{n}$ directly though because not all maps give rise to different isometries. More accurately, if $U=\tau U'$ for a $\tau \in S^1$, then $U$ and $U'$, as well as $U \circ Q$ and $U' \circ Q$, give rise to the same isometry on $\C P^{n}$. \\
\end{remark}

\begin{remark}
\label{complexaction}
We can now take a closer look at the actions of $SU(n-1)$ and the torus $T$ on $\C P^{n}$ as in remark \ref{torusiso}. We have $SU(n-1) \times T = SU(n-1) \times U(1) \times U(1) \subset U(n+1)$, thus both consist of isometries on complex projective space by the remark above. Both $SU(n-1)$ and the torus act effectively on $\C P^n$. Note however, that the combined action of $SU(n-1) \times T$ is not effective, i.e. the isometries are not all pairwise different. Suppose $A, A'$ are two matrices in $U(n+1)$ of the form $B \times e^{ia} \times e^{ib} \in SU(n-1) \times T \subset U(n+1)$ with $a,b \in [ 0, 2 \pi [$. They induce the same isometry on $\C P^{n}$ if there exists a complex number $\tau \in S^1$ with $A=\tau A'$. This implies $B= \tau B'$ and as $B$ and $B'$ have determinant one we get $\tau=e^{\frac{2\pi i}{n-1}k}$ for an integer $k=0,1,\hdots , n-2$. Thus for every element in $SU(n-1) \times T$ there exists a family of elements that induce the same isometry on $\C P^{n}$. However, this family is finite. That means that the action of $SU(n-1)\times T$ is still almost effective, which is sufficient in our setting. \\ 
\end{remark}

\begin{remark}
\label{Hopfcommute}
The Hopf action on the sphere $S^{2n+1}$ embedded in $\C^{n+1}$ is given by matrices of the form $\tau id$ with a complex numbers $\tau$ of norm one. As the complex multiples of the identity are exactly the center of $U(n+1)$ we get that the Hopf action commutes with the action of $U(n+1)$ and a fortiori with the actions of the torus $T$ and of $SU(n-1)$.
\end{remark}

\pagebreak
\section{Isospectral metrics on $\C P^{n}$}
%****************************************************************

\begin{notation}
In the following two chapters we will frequently jump between objects on the sphere and objects on complex projective space. We will continue to denote a point on the sphere embedded in $\C^{n+1}$ by $p=(q,r,s)$, where $q$ stands for the first $n-1$ complex coordinates and $r$ and $s$ for the last two as in remark \ref{torusiso}. A point in complex projective space, i.e. the equivalence class of a point on the sphere, will be denoted by $[p]=[q,r,s]$. \\
\end{notation}

\begin{remark}
\label{barlambda}
Let the sphere $S^{2n+1}$ be embedded in $\C ^{n+1}$ and let $\tau$ denote a complex number with norm one. Multiplication by $\tau$ defines a map on the sphere that will also be denoted by $\tau : S^{2n+1} \ra S^{2n+1}$.
\label{ubertragen}
Let $X \in \mcV (S^{2n+1})$ with \\
$(\diamondsuit) \qquad X_{\tau p}=\tau_* X_{p} \quad$ \\
for all  $\tau$. These vector fields are invariant under the Hopf action. We additionally assume that $X_{p} \perp ip$, that is we restrict our attention to vector fields which are horizontal with respect to the Hopf fibration. Thus we get an isomorphism between all vector fields on $S^{2n+1}$ satisfying these conditions, that is all $S^1$-invariant Hopf-horizontal vector fields, and all vector fields on $\C P^n$. This isomorphism is given by the differential of the Hopf projection $\Pi: S^{2n+1} \ra \C P^n$. We will denote $S^1$-invariant Hopf-horizontal vector fields on the sphere simply by $X$ and the associated vector fields on $\C P^n$ by $[X]$. \\
Note that the vector fields $p \mapsto j_Z p$ from remark \ref{torusiso} satisfy condition $(\diamondsuit)$ because we have $(j_Z\tau q, 0,0)=\tau(j_Z q,0,0)$ for all $j_Z \in \mfs \mfu (n-1)$. However, they are not orthogonal to the vector field $ip$.
\end{remark}

The next goal is to construct suitable 1-forms $\bar{\lambda}$ on complex projective space. For a given $S^1$-invariant 1-form $\lambda$ on $S^{2n+1}$ we define an associated 1-form on $\C P^n$, denoted by $\bar{\lambda}$, by letting $\bar{\lambda}([X])_{[p]}:={\lambda}(X)_{p}$ where $p$ is any representative of $[p]$ on the sphere and $X$ is the unique $S^1$-invariant Hopf-horizontal lift of $[X]$ to $S^{2n+1}$. The remark above shows that $\bar{\lambda}$ is a well defined 1-form on $\C P^n$. \\

By remark \ref{complexaction} we know that $SU(n-1)$ and the torus $T$ act on $\C P^n$. By remark \ref{Hopfcommute} we know that both these actions commute with the Hopf action. That means the 1-form $\bar{\lambda}$ on $\C P^n$ satisfies $\bar{\lambda}^k([X])_{[p]}=g_{FS}( \Pi_*(j_{Z_k}p), [X] )$ for $k=1,2$. \\
However the 1-form $\bar\lambda$ is in general not admissible as in definition \ref{admissible} even if $\lambda$ is admissible with respect to the T-action on the sphere. The T-invariance carries over to $\C P^n$. We have $j_{Z_k}p=(j_{Z_k}q,0,0)$ is T-invariant by the definition of the torus action, in consequence $\Pi_*(j_{Z_k}p)$ is T-invariant and it follows that $\bar{\lambda}$ is T-invariant.
The horizontality with respect to the T-action, on the other hand, is not transmitted to $\C P^n$. We have to horizontalize. This is possible in a general way as the next proposition shows.

\begin{proposition}
\cite{Schueth03} \label{horizontal} Let $(M, g)$ be a Riemannian manifold on which $T$ acts by isometries. Let $\lambda$ be a $T$-invariant 1-form on $M$. Then the 1-form $\lambda_h$ defined by:
\begin{eqnarray*}
\lambda_h(X):=& ||Z_1^* \wedge Z_2^*||^2 \lambda(X) \\
&- \langle X \wedge Z_{2}^* , Z_{1}^* \wedge Z_{2}^* \rangle \lambda(Z_{1}^*) \\
&- \langle Z_{1}^* \wedge X, Z_{1}^* \wedge Z_{2}^* \rangle \lambda(Z_{2}^*) \\
\end{eqnarray*}
is admissible with respect to the T-action. Here $\{Z_1,  Z_2\}$ is a basis for $\mfz$ and the scalar product on $\Lambda^2 T_p M$ is the one induced by $g$.
\end{proposition}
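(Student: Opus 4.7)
My plan is to verify the two defining properties of admissibility from Definition \ref{admissible} separately: $T$-invariance and horizontality. The horizontality part is essentially an algebraic identity, with the three terms in the formula cancelling exactly when $X$ is replaced by $Z_1^*$ or $Z_2^*$; the $T$-invariance then follows from the fact that every ingredient in the definition is itself $T$-invariant in the appropriate sense.

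For horizontality I would plug $X = Z_1^*$ and $X = Z_2^*$ directly into the formula. With $X = Z_1^*$, the first term $||Z_1^* \wedge Z_2^*||^2 \lambda(Z_1^*)$ is exactly cancelled by the second term $-\langle Z_1^* \wedge Z_2^*, Z_1^* \wedge Z_2^* \rangle \lambda(Z_1^*)$, and the third term vanishes because $Z_1^* \wedge Z_1^* = 0$. The case $X = Z_2^*$ is symmetric, and by linearity $\lambda_h$ vanishes on all of $span\{Z_1^*, Z_2^*\}$, which is precisely the tangent space to the $T$-orbit at each point.

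For $T$-invariance I would use that, because $T$ is abelian, each fundamental vector field $Z_k^*$ is itself $T$-invariant, i.e.~$(R_t)_* Z_k^*|_p = Z_k^*|_{R_t p}$; and because $T$ acts by isometries, the inner product induced by $g$ on $\Lambda^2 TM$ is preserved by the pushforward $(R_t)_*$. Combined with the hypothesis that $\lambda$ is $T$-invariant, these facts make the scalar function $||Z_1^* \wedge Z_2^*||^2$, the scalar functions $\lambda(Z_k^*)$, and both pairings $\langle \cdot \wedge Z_2^*, Z_1^* \wedge Z_2^* \rangle$ and $\langle Z_1^* \wedge \cdot, Z_1^* \wedge Z_2^* \rangle$ behave equivariantly under $T$. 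Evaluating $(R_t^* \lambda_h)_p(X_p)$ term by term then yields $\lambda_h|_p(X_p)$, as required.

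There is no serious obstacle here: the formula for $\lambda_h$ is precisely $||Z_1^* \wedge Z_2^*||^2$ times the orthogonal projection of $\lambda$ onto the $g$-orthogonal complement of $span\{Z_1^*, Z_2^*\}$, which makes both admissibility conditions transparent once unpacked. The scalar multiplier has the useful additional effect that the formula extends smoothly across the (generically lower-dimensional) locus where $Z_1^*$ and $Z_2^*$ fail to be linearly independent, avoiding any division-by-zero issue one would encounter in the naked projection formula.
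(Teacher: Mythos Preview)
Your proof is correct and follows the same approach as the paper, which simply states that ``one easily checks that $\lambda_h(Z_k^*)=0$ for $k=1,2$'' and that T-invariance is ``obvious''. You have just made explicit the cancellations and the reasons for T-invariance that the paper leaves to the reader, and added a helpful remark about the projection interpretation and the role of the scalar factor.
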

\begin{proof}
One easily checks that $\lambda_h(Z_k^*)=0$ for  $k=1, 2$. Thus $\lambda_h$ is horizontal, and obviously it is again T-invariant.
\end{proof}

In order to calculate the horizontalized 1-form $\lambda_h$ on $\C P^n$ we will find explicit forms for the horizontalized versions of the vector fields $Z_1^*$ and $Z_2^*$ on $S^{2n+1}$. We will denote by $Z_{h,1}^*$ and $Z_{h,2}^*$ the horizontal parts of $Z_1^*$ and $Z_2^*$ with respect to the Hopf action.  
By the formula in notation \ref{VF*} we get $Z_1^*=(0,i r,0)$ and $Z_2^*=(0,0,i s)$ on the sphere at the point $p=(q,r,s)$. To transfer these vector fields to $\C P^n$ we need to orthogonalize them to the vector field $ip$. Thus we get
\begin{eqnarray*}
(Z_{h,1 }^*)_p=& (0,ir,0) - \frac{\langle (0,i r,0), ip \rangle}{|ip|^2}ip \\
=&(0,i r,0)- |r|^2 ip \\
(Z_{h,2 }^*)_p=& (0,0,i s) - \frac{\langle (0,0,i s), ip \rangle}{|ip|^2}ip \\
=&(0,0,i s) - |s|^2ip 
\end{eqnarray*}
Note that these are the $S^1$-invariant, Hopf-horizontal lifts of the corresponding vector fields $[p] \mapsto \frac{d}{dt}\big|_{t=0}e^{tZ_1}[p]$ and $[p] \mapsto \frac{d}{dt}\big|_{t=0}e^{tZ_2}[p]$ on $\C P^n$. \\
Since the metric $g_{FS}$ on $\C P^n$ is defined as a submersion metric, the scalar products and the norms of these vector fields can be calculated as $\langle Z_{h,1}^*, Z_{h,2}^* \rangle$ and $||Z_{h,k}^*||^2$, $k=1,2$ respectively.
We will need \\
\begin{eqnarray*}
||Z_{h,1}^*||^2_p =& |ir|^2-2|r|^2 \langle (0,ir,0),ip \rangle +|r|^4|ip|^2 \\
=& |r|^2(1-|r|^2) \\
||Z_{h,2}^*||^2_p =& |is|^2-2|s|^2 \langle (0,0,is),ip \rangle +|s|^4|ip|^2 \\
=& |s|^2(1-|s|^2) \\
\langle Z_{h,1}^*, Z_{h,2}^* \rangle _p
=& \langle (0,ir,0), (0,0,is) \rangle - |r|^2 \langle ip, (0,0,is) \rangle \\
& - |s|^2 \langle ip, (0,ir,0) \rangle + |r|^2|s|^2|ip|^2 \\
=& -|r|^2|s|^2
\end{eqnarray*}

\begin{proposition}
\label{defeta}
Let $j: \mfz \ra \mfs \mfu (n-1)$ be a linear map. Then the $\mfz$-valued 1-form $\eta$ on $S^{2n+1}$ defined by 
\begin{equation*}
\eta^k(X)_p :=|q|^2\langle j_{Z_k} q, X_q \rangle - \langle j_{Z_k} q, iq \rangle \langle iq,X_q \rangle \quad \textit{ for } k=1,2
\end{equation*}
is $S^1$-invariant and Hopf-horizontal. The induced 1-form $\bar{\eta}$ on $\C P^n$ is admissible with respect to the $T$-action. Here, $\eta^1(X)$ and $\eta^2(X)$ denote the coordinates in $\mfz$ and $X_q,X_r$ and $X_s$ denote the components of $X \in T_{(q,r,s)}S^{2n+1} \cong \C^{n+1}=\C^{n-1} \oplus \C \oplus \C$ .\\
\end{proposition}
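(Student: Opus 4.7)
The proposition makes four assertions about $\eta$ that I would establish by direct computation from the defining formula: $\eta$ is (i) $S^1$-invariant and (ii) Hopf-horizontal on $S^{2n+1}$, which together with Remark~\ref{barlambda} guarantee that $\eta$ descends to a well-defined 1-form $\bar\eta$ on $\C P^n$; and the descended form is (iii) $T$-invariant and (iv) $T$-horizontal, which together give admissibility in the sense of Definition~\ref{admissible}.

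Properties (i)--(iii) will fall out with essentially no work. For (i), under the Hopf action $p\mapsto \tau p$ one has $q\mapsto \tau q$ and $X_q\mapsto \tau X_q$; since $\langle \tau u,\tau v\rangle = \langle u,v\rangle$ for $|\tau|=1$, and since $j_{Z_k}$ is $\C$-linear on the $q$-factor, every $\tau$ in the defining formula cancels. For (ii) I would set $X=ip$, so $X_q=iq$; the two summands then become $|q|^2\langle j_{Z_k}q, iq\rangle$ and $\langle j_{Z_k}q, iq\rangle \langle iq, iq\rangle = |q|^2\langle j_{Z_k}q, iq\rangle$, which cancel. For (iii) the $T$-action rescales only the $r$- and $s$-components of $p$ and of $X$, while $\eta^k$ depends only on $q$ and $X_q$, so $T$-invariance is immediate.

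The actual content of the proposition lies in (iv). By Remark~\ref{barlambda} the field $[p]\mapsto \frac{d}{dt}\big|_{t=0} e^{tZ_\ell}[p]$ on $\C P^n$ has $S^1$-invariant Hopf-horizontal lift $Z^*_{h,\ell}$, so $\bar\eta^k([Z_\ell^*]) = \eta^k(Z^*_{h,\ell})$ by the definition of $\bar\eta$. From the explicit formulas computed earlier, $(Z^*_{h,1})_q = -|r|^2 iq$ and $(Z^*_{h,2})_q = -|s|^2 iq$, so in both cases $X_q$ is a real scalar multiple of $iq$. Substituting $X_q = c\cdot iq$ into the defining formula yields
$$c|q|^2\langle j_{Z_k}q, iq\rangle - c\langle j_{Z_k}q, iq\rangle\langle iq, iq\rangle = c|q|^2\langle j_{Z_k}q, iq\rangle - c|q|^2\langle j_{Z_k}q, iq\rangle = 0,$$
as required.

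No step is a genuine obstacle; the proposition is essentially a verification. The only mildly non-obvious point is recognising \emph{why} $\eta$ is written in this precise form: the factor $|q|^2$ in front of the first summand and the product $\langle j_{Z_k}q, iq\rangle\langle iq, X_q\rangle$ in the second are arranged so that exactly those $X$ whose $q$-component is proportional to $iq$ are annihilated. This is precisely the common $q$-component shared by the Hopf-vertical vector $ip$ (which must be killed for (ii)) and by the horizontalized $T$-generators $Z^*_{h,\ell}$ (which must be killed for (iv)), which is why a single formula handles both properties at once.
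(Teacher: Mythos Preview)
Your proof is correct and, for the bare statement, shorter than the paper's. The verifications (i)--(iv) are all accurate; in particular your key observation for (iv) --- that the $q$-component of the horizontalized torus generators $Z_{h,\ell}^*$ is a real multiple of $iq$, and that this is exactly the input killed by the formula --- is precisely what makes $\bar\eta$ horizontal with respect to the $T$-action.

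The paper takes a genuinely different route to admissibility. Rather than checking $T$-horizontality of $\bar\eta$ directly, it starts from the admissible 1-form $\lambda$ on the sphere (Remark~\ref{deflambda}), passes to the induced $\bar\lambda$ on $\C P^n$ (which is $T$-invariant but not $T$-horizontal), applies the general horizontalization procedure of Proposition~\ref{horizontal} to obtain $\bar\lambda_h$, and then carries out a lengthy computation showing that $\bar\lambda_h = |r|^2|s|^2\,\bar\eta$. Since $\bar\lambda_h$ is admissible by construction and $|r|^2|s|^2$ is a $T$-invariant scalar function, $\bar\eta$ is admissible as well. The $S^1$-invariance and Hopf-horizontality of $\eta$ are checked directly at the end, just as you do. What the paper's approach buys is an explanation of \emph{where the formula for $\eta$ comes from}: it is not pulled out of the air but is, up to a harmless scalar factor, the canonical horizontalization of $\bar\lambda$. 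Your approach is more elementary and more efficient once the formula is on the table, and your closing paragraph in effect rediscovers the same structural reason --- that $\eta$ annihilates exactly those vectors whose $q$-part is proportional to $iq$ --- without going through the machinery of Proposition~\ref{horizontal}.
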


\begin{proof}
We will at first calculate $\bar{\lambda}_h$ on $\C P^n$ in our setting. Here $\lambda$ is the 1-form on $S^{2n+1}$ associated with $j$ as in remark \ref{deflambda}, and $\bar\lambda$ the associated 1-form on $\C P^n$ as introduced in remark \ref{barlambda}. Let $[X] \in T_{[p]}\C P^n$, and let $X$ denote the $S^1$-invariant, Hopf-horizontal lift of $[X]$ to $T_pS^{2n+1}$. Then we have by definition of $\lambda$ on $S^{2n+1}$ and of $\bar\lambda$ on $\C P^n$, by recalling that the metric $g_0$ on $\C P^n$ is a submersion metric arising from the Hopf fibration, and by the formula in proposition \ref{horizontal} :\\
\begin{eqnarray*}
\bar{\lambda}_h^k([X])=& ||[Z_{h,1}^*] \wedge [Z_{h,2}^*]||^2 \bar{\lambda}^k([X])  \\
&- \langle ([X] \wedge [Z_{h,2}^*] , [Z_{h,1}^*] \wedge [Z_{h,2}^*] \rangle \bar{\lambda}^k([Z_{h,1}^*]) \\
&- \langle [Z_{h,1}^*] \wedge [X], [Z_{h,1}^*] \wedge [Z_{h,2}^*] \rangle \bar{\lambda}^k([Z_{h,2}^*]) \\
%%%%%%%%%%%%%%%%
=& ||Z_{h,1}^* \wedge Z_{h,2}^*||^2 \lambda^k(X)  \\
&- \langle X \wedge Z_{h,2}^* , Z_{h,1}^* \wedge Z_{h,2}^* \rangle \lambda^k(Z_{h,1}^*) \\
&- \langle Z_{h,1}^* \wedge X, Z_{h,1}^* \wedge Z_{h,2}^* \rangle \lambda^k(Z_{h,2}^*) \\
%%%%%%%%%%%%%%%%
=& (||Z_{h,1}^*||^2||Z_{h,2}^*||^2 - \langle Z_{h,1}^*, Z_{h,2}^* \rangle ^2 )\langle j_{Z_k}p,X \rangle  \\
&- (\langle X , Z_{h,1}^* \rangle ||Z_{h,2}^*||^2- \langle X, Z_{h,2}^* \rangle \langle Z_{h,1}^*, Z_{h,2}^* \rangle) \langle j_{Z_k}p,Z_{h,1}^* \rangle \\
&- (||Z_{h,1}^*||^2 \langle X, Z_{h,2}^* \rangle - \langle X , Z_{h,1}^* \rangle \langle Z_{h,1}^*, Z_{h,2}^* \rangle)\langle j_{Z_k}p,Z_{h,2}^* \rangle \\
%%%%%%%%%%%%%%%%
=& (|r|^2(1-|r|^2)|s|^2(1-|s|^2) - |r|^4|s|^4)\langle j_{Z_k}p,X \rangle  \\
&- (\langle X , Z_{h,1}^* \rangle |s|^2(1-|s|^2) + \langle X, Z_{h,2}^* \rangle |r|^2|s|^2) \\
& \cdot (\langle j_{Z_k}p,(0,ir,0) \rangle - \langle j_{Z_k}p, |r|^2 ip\rangle) \\
&- (|r|^2(1-|r|^2) \langle X, Z_{h,2}^* \rangle + \langle X , Z_{h,1}^* \rangle |r|^2|s|^2) \\
& \cdot(\langle j_{Z_k}p,(0,0,is) \rangle - \langle j_{Z_k}p, |s|^2ip \rangle) \\
%%%%%%%%%%%%%%%%
=& |r|^2|s|^2(1-|r|^2-|s|^2)\langle j_{Z_k}p,X \rangle  \\
&+ ( |s|^2(1-|s|^2)|r|^2+|r|^2|s|^4) \langle j_{Z_k}p, ip \rangle \langle X , Z_{h,1}^* \rangle\\
&+ ( |r|^2(1-|r|^2)|s|^2+|s|^2|r|^4) \langle j_{Z_k}p, ip \rangle \langle X , Z_{h,2}^* \rangle \\
%%%%%%%%%%%%%%%%
=& |r|^2|s|^2|q|^2\langle j_{Z_k}p,X \rangle  \\
&+ |r|^2|s|^2 \langle j_{Z_k}p, ip \rangle \langle X , Z_{h,1}^* + Z_{h,2}^* \rangle
\end{eqnarray*}
Next we observe that multiplying a 1-form by a $T$-invariant function does not change whether it is admissible or not. Neither the T-invariance nor the horizontality are influenced by it. Note that $[q,r,s] \mapsto |r|^2|s|^2$ is indeed a well-defined, T-invariant funtion on $\C P^n$. We will use this fact to simplify $\lambda_h$ by a factor $|r|^2|s|^2$. As $X$ is the lift of $[X]$ it is Hopf-horizontal and we have $\langle ip, X \rangle =0$. Therefore,
\begin{eqnarray*}
& |q|^2\langle j_{Z_k} p, X \rangle + \langle j_{Z_k} p, ip \rangle \langle Z_{h,1}^*+Z_{h,2}^*, X \rangle \\
=& |q|^2\langle j_{Z_k} q, X_q \rangle + \langle j_{Z_k} q, iq \rangle \langle (0,ir,is) - (|r|^2+|s|^2)ip, X \rangle \\
=& |q|^2\langle j_{Z_k} q, X_q \rangle + \langle j_{Z_k} q, iq \rangle \langle (0,ir,is) , X \rangle \\
&\textit{we will now substract the term } 0=\langle j_{Z_k} q, iq \rangle \langle ip, X \rangle \\
=& |q|^2\langle j_{Z_k} q, X_q \rangle + \langle j_{Z_k} q, iq \rangle \langle (0,ir,is) - ip, X \rangle \\
=& |q|^2\langle j_{Z_k} q, X_q \rangle - \langle j_{Z_k} q, iq \rangle \langle (iq,0,0), X \rangle \\
=& |q|^2\langle j_{Z_k} q, X_q \rangle - \langle j_{Z_k} q, iq \rangle \langle iq,X_q \rangle \\
=& \eta^k(X)
\end{eqnarray*}
Note that $\eta$ is $S^1$-invariant since: 
\begin{eqnarray*}
\eta^k(\tau_*X)_{\tau p} =& |\tau q|^2\langle j_{Z_k}\tau q, (\tau_* X)_{q} \rangle - \langle j_{Z_k} \tau q, i\tau q \rangle \langle i\tau q,(\tau_* X)_q \rangle \\
=& |q|^2\langle \tau j_{Z_k} q, \tau X_q \rangle - \langle \tau j_{Z_k} q, \tau iq \rangle \langle \tau iq,\tau X_q \rangle \\
=& \eta^k(X)_p \\
\end{eqnarray*}
because $\tau$ is an isometry.
Moreover, $\eta$ is Hopf-horizontal because we have:
\begin{equation*}
\eta^k(ip) = |q|^2\langle j_{Z_k} q, iq \rangle - \langle j_{Z_k} q, iq \rangle \langle iq,iq \rangle = 0 
\end{equation*}
Thus $\eta$ canonically induces a $\mfz$-valued 1-form $\bar{\eta}$ on $\C P^n$. \\
Let $\Pi: S^{2n+1} \ra \C P^n$ denote the Hopf projection. Then we have: 
\begin{equation*}
\quad \eta = \Pi^* \bar{\eta}
\end{equation*}
In particular we have: \\
\begin{equation*}
\quad \bar{\eta}([X])_{[q,r,s]} = \eta(X)_{(q,r,s)}
\end{equation*}
where $[X]$ is any vector field on $\C P^n$ and $X$ its $S^1$-invariant Hopf-horizontal lift to $S^{2n+1}$. \\
By the above calculation, we have $\bar{\lambda}_h=|r|^2|s|^2\bar{\eta}$. Thus, up to the factor $|r|^2|s|^2$, the 1-form $\bar{\eta}$ on $\C P^n$ is equal to $\bar{\lambda}_h$. Since $\bar{\lambda}_h$ is admissible, so is $\bar{\eta}$.
\end{proof}

\begin{notation}
Let $(M,g_0)= (\C P^n, g_{FS})$. We recall that $g_{FS}$ denotes the Fubini-Study metric on $\C P^n$, obtained as the Riemannian submersion metric associated to the standard metric $g_{eucl}$ on $S^{2n+1}$ via the Hopf projection. \\
Given a linear map $j:\mfz \ra \mfs \mfu (n-1)$, let $\bar{\eta}$ be the corresponding admissible 1-form on $\C P^n$ as in proposition \ref{defeta} and define the associated Riemannian metric $g_{\bar{\eta}}$ on $\C P^n$ as in definition \ref{admissible}. \\
Explicitly we have, for vector fields $[X], [Y]$ on $\C P^n$ and their $S^1$-invariant Hopf-horizontal lifts $X,Y$ to the sphere:
\begin{eqnarray*}
& g_{\bar{\eta}}([X],[Y]) \\
=& g_{FS}([X]+\bar{\eta}[X]^*, [Y]+\bar{\eta}[Y]^*) \\
=& \langle X+\eta^1(X)Z_{h,1}^*+\eta^2(X)Z_{h,2}^*, Y+\eta^1(Y)Z_{h,1}^*+\eta^2(Y)Z_{h,2}^* \rangle \\
\end{eqnarray*}
\end{notation}

\begin{remark}
This new metric ${g}_{\bar\eta}$ is again T-invariant as $\bar\eta$ is T-invariant.  As $\bar{\eta}$ is an admissible 1-form on $\C P^n$ we can apply proposition \ref{riemsubmersion}, so we have that the torus acts isometrically on $(M, {g}_{\bar\eta})$ and that the induced Riemannian submersion metric $g_{\bar{\eta}}^T$ on $\hat{M} \slash T$ is equal to the metric $g_{0}^T$.
\end{remark}

\begin{theorem}
Let $j,j': \mfz \cong \R^2 \ra \mfs \mfu (n-1)$ be two linear maps,
and let ${g}_{\bar\eta}, {g}_{\bar\eta'}$ be the associated pair of
Riemannian metrics on $\C P^{n}$ as above. If $j \sim j'$, then
the Riemannian manifolds $(\C P^{n},{g}_{\bar\eta})$ and $(\C
P^{n},{g}_{\bar\eta'})$ are isospectral.
\end{theorem}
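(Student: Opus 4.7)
The plan is to apply Theorem \ref{isomlambda} directly, with $M = M' = \C P^n$, $g_0 = g_{FS}$, and the admissible $\mfz$-valued 1-forms $\bar\eta, \bar\eta'$ constructed in Proposition \ref{defeta}. What needs to be checked is condition $(I)$: for every $\mu \in \mcL^*$, producing a $T$-equivariant isometry $F_\mu$ of $(\C P^n, g_{FS})$ with $\mu \circ \bar\eta = F_\mu^*(\mu \circ \bar\eta')$.

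First I would translate $(I)$ into a statement about the map $j$. Given $\mu = \mu_1 e_1^* + \mu_2 e_2^*$, set $Z_\mu := \mu_1 Z_1 + \mu_2 Z_2 \in \mfz$. By linearity of $j$ and the explicit formula for $\eta$, one computes
\begin{equation*}
(\mu \circ \eta)(X)_p = |q|^2 \langle j_{Z_\mu} q, X_q \rangle - \langle j_{Z_\mu} q, iq \rangle \langle iq, X_q \rangle,
\end{equation*}
and analogously for $\eta'$. Since $j \sim j'$, the hypothesis provides $A := A_{Z_\mu} \in SU(n-1)$ with $j'_{Z_\mu} = A\,j_{Z_\mu}\,A^{-1}$.

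Next I would construct $F_\mu$. The matrix $A$ acts on $\C^{n+1}$ via the block inclusion $SU(n-1) \hookrightarrow U(n+1)$ from Remark \ref{torusiso}, sending $p = (q,r,s) \mapsto (Aq, r, s)$. By Remark \ref{isometrie} this descends through the Hopf fibration to an isometry $F_\mu$ of $(\C P^n, g_{FS})$; moreover $A$ commutes with the $T$-action (which only affects $r,s$), so $F_\mu$ is $T$-equivariant. To verify $A^*(\mu \circ \eta') = \mu \circ \eta$ on the sphere, one uses that $A$ is unitary and complex linear, so $|Aq|^2 = |q|^2$, $A(iq) = i(Aq)$, and $\langle A\cdot, A\cdot\rangle = \langle \cdot, \cdot\rangle$; substituting the conjugation relation $A j_{Z_\mu} A^{-1} = j'_{Z_\mu}$ into the formula for $\mu \circ \eta'$ collapses the $A$'s and yields $\mu \circ \eta$. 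Since $\eta = \Pi^*\bar\eta$, $\eta' = \Pi^*\bar\eta'$, and $F_\mu \circ \Pi = \Pi \circ A$, the identity descends to $F_\mu^*(\mu \circ \bar\eta') = \mu \circ \bar\eta$.

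There is no serious obstacle; the proof is essentially bookkeeping, but the one point that warrants care is the passage from the sphere computation to $\C P^n$, because $\bar\eta$ is defined via Hopf-horizontal lifts rather than by a closed formula. Concretely, when I compute $F_\mu^*(\mu \circ \bar\eta')([X])_{[p]}$, I must lift $F_{\mu *}[X]$ to the unique $S^1$-invariant Hopf-horizontal vector field on $S^{2n+1}$; this lift is precisely $A_*X$ where $X$ is the lift of $[X]$, because $A$ commutes with the Hopf action (Remark \ref{Hopfcommute}) and preserves horizontality (it is unitary, hence preserves $\langle ip, \cdot\rangle$). With this, the spherical identity $A^*(\mu \circ \eta') = \mu \circ \eta$ transfers unambiguously to $\C P^n$, and Theorem \ref{isomlambda} yields isospectrality of $(\C P^n, g_{\bar\eta})$ and $(\C P^n, g_{\bar\eta'})$.
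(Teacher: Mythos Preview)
Your proposal is correct and follows essentially the same approach as the paper: define the isometry on $S^{2n+1}$ via the block matrix $(A_{Z_\mu}, Id) \in SU(n+1)$, verify $A^*(\mu\circ\eta') = \mu\circ\eta$ there by unitarity and the conjugation relation, and then descend to $\C P^n$ using $\eta = \Pi^*\bar\eta$ and $\Pi\circ A = F_\mu\circ\Pi$. Your treatment of the descent step---checking that $A_*X$ is indeed the Hopf-horizontal lift of $F_{\mu*}[X]$---is in fact slightly more explicit than the paper's.
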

\begin{proof}
We will show that the condition:\\
\begin{itemize}
\item[$(I)$] For every $\mu \in \mcL^*$ there exists a T-equivariant $F_{\mu}
\in Isom(\C P^{n},g_{FS})$ which satisfies $\mu \circ \bar{\eta} = F_{\mu}^*(\mu \circ \bar{\eta}')$.
\end{itemize}
is fulfilled. We can then conclude by theorem \ref{isomlambda} that the two metrics on $\C P^{n}$ are isospectral. \\
We will at first give an explicit isometry fulfilling the analogon of $(I)$ on the sphere $(S^{2n+1}, g_{eucl})$ and then show it induces the desired isometry on $(\C P^{n}, g_{FS})$.\\
Fix an arbitrary $\mu \in \mcL^{*}$. As we have $\mcL^{*} \subset \mfz^{*}$, we can set $Z \in \mfz$ to be the vector corresponding to $\mu$ under the identification of $\mfz$ with $\mfz^*$ associated with the basis $\{Z_1,Z_2 \}$. Let $A_Z \in SU(n-1)$ be as in definition \ref{defj}. We set $G_{\mu}:=(A_Z, Id) \in SU(n-1) \times \{Id\} \subset SU(n+1)$. Thus $G_{\mu}$ defines an isometry of the sphere $(S^{2n+1}, g_{eucl})$. This isometry $G_{\mu}$ satisfies: \\
\begin{eqnarray*}
& G_{\mu}^{*}(\mu \circ \eta')_{p}(X)  \\
=& (\mu \circ \eta')_{(A_Z p)}(A_Z X) \\
%%%%%%%%%
=& |q|^2\langle j'_Z A_Z q, A_Z X_q \rangle 
+ \langle j'_Z A_Z q, iA_Z q \rangle \langle i A_Z q, A_Z X \rangle \\
%%%%%%%%%%
=& |q|^2\langle \bar A_Z^T j'_Z A_Z q, X_q \rangle 
+ \langle \bar A_Z^T j'_Z A_Z q, iq \rangle \langle iq, X \rangle \\
%%%%%%%%
=& |q|^2\langle j_Z q, X_q \rangle 
+ \langle j_Z q, iq \rangle \langle iq, X \rangle \\
%%%%%%%
=& (\mu \circ \eta)_{p}(X) 
\end{eqnarray*}
Since $G_{\mu} \in SU(n+1)$ it induces an isometry of $(\C P^n, g_{FS})$. Denote this isometry by $F_{\mu}$. Then the equation 
\begin{equation*}
\mu \circ \eta = G_{\mu}^*(\mu \circ \eta') \quad \textit{on } S^{2n+1}
\end{equation*}
implies 
\begin{equation*}
\mu \circ \bar{\eta} = F_{\mu}^*(\mu \circ \bar{\eta}') \quad \textit{on } \C P^n
\end{equation*}
because of $\eta^{(')} = \Pi^* \bar{\eta}^{(')}$ and  $\Pi \circ G_{\mu}= F_{\mu} \circ \Pi$ where $\Pi$ denotes the Hopf projection. \\
\end{proof}

\pagebreak
\section{Nonisometry}
%***************************************************************

\begin{notation}
\label{connectionform} 
\begin{tabular}{r p{12.5cm}}
\textup{ 1.} & Let $(M,g_0) = (\C P ^{n}, g_{FS})$, where all representatives in $\C^{n+1}$ are choosen with norm one. Let $[p]=[q,r,s]$ be a point in $\C P ^{n}$, where $q$ stands for the first $n-1$ components and $r$ and $s$ for the last two. \\
\end{tabular}
\begin{enumerate}
\setcounter{enumi}{1}
\item  Let $\hat M = \{ [ q,r,s ] \in \C P^{n} \mid q,r,s \neq 0 \}$, $\hat M$ is open and dense in $\C P^{n}$ and $T$ acts freely on $\hat M$, that is $\hat M$ can be seen as a principal $T$-bundle. \\ 
\item Recall that the torus action and the action induced by the Hopf fibration commute on $\C^{n+1}$ and a fortiori on the sphere. Setting $a=|r|$ and $b=|s|$ we see that $\hat M \slash T$ can be identified with 
\begin{equation*}
\big\{ ([q],a,b) \in  \C P^{n-2} \times \R \times \R \textit{ } \big| \textit{ } a,b,|q|>0, a^2+b^2 +|q|^2 = 1 \big\}
\end{equation*}
By $|q|$ we mean the norm of a representative of $[q]$. The metric induced by $g_0^{T}$ on the copy of $\C P^{n-2}$ with fixed values of $a$ and $b$ is $|q|^2g_{FS}$, that is a scalar multiple of the standard Fubini-Study metric. \\
\item As $\hat M$ is a principal T-bundle we have a $\mfz$-valued connection form $\omega_{\bar{\eta}}$ that is associated to the $\mfz$-valued 1-form $\bar{\eta}$. In any point $[p]$ in $\hat M$ we can decompose the tangent space into the flow of the torus action $\mfz^*$ as in notation \ref{VF*} and its orthogonal complement with respect to the metric $g_{\bar{\eta}}$. The connection form $\omega_{\bar{\eta}}$ assigns to each vector in the tangent space of $\hat M$ its component in $\mfz^*$. In other words we have $\omega_{\bar{\eta}}(Z^*)=Z$ for all $Z \in \mfz$ and $\omega_{\bar{\eta}}([X])=0$ for all $[X]$ that are orthogonal to $\mfz^*$ with respect to the metric $g_{\bar{\eta}}$ where $Z^*$ now denotes the vector field on $\hat{M}$ as in notation \ref{VF*}. Let $\omega_0$ denote the connection form associated with $g_{0}$. Then we have $\omega_{\bar{\eta}}=\omega_0 + \bar{\eta}$ by the defintion of $g_{\bar{\eta}}$.\\
\item Let $\Omega_{\bar{\eta}}$ denote the curvature form on $\hat M /T$ associated with the connection form $\omega_{\bar{\eta}}$. In the general case we have $\pi^* \Omega(X,Y)=d\omega (X,Y) + \frac{1}{2}[\omega(X), \omega(Y) ]$ where $\pi^*$ denotes the pullback of the projection $\pi:\hat{M} \ra \hat{M}\slash T$ and $[,]$ is the Lie bracket on $\mfz$. As $T$ is abelian this second term is zero and we get $\pi^* \Omega_{\bar{\eta}}=d\omega_{\bar{\eta}} $. \\
\end{enumerate}
\end{notation}

\begin{notation}
\label{Tpreserving}
We will say a diffeomorphism $F : M \ra M$ is T-preserving if conjugation by F preserves the torus seen as a subgroup of Diffeo(M). If $F$ is T-preserving then we denote by $\Psi_F$ the automorphism of $\mfz$, the Lie algebra of the torus, induced by conjugation by $F$. Note that $F_*(Z^*)=\Psi_F(Z)^* \circ F$ where $Z \in \mfz$ and $Z^*$ denotes the corresponding vector field on $\C P^n$ as in notation \ref{VF*}.
\end{notation}

\begin{definition}
Let $Aut^T_{g_0}(M)$ denote the group of all diffeomorphisms $F: M \ra M$ that
\begin{enumerate}
\item are T-preserving, 
\item preserve the $g_0$-norm of vectors tangent to the T-orbits and
\item induce an isometry of $(\hat{M} \slash T, g_0^T)$.
\end{enumerate}
Let $\overline{Aut}^T_{g_0}(M)$ denote the group of induced isometries $\bar{F}$ of $(\hat{M} \slash T, g_0^T)$. 
\end{definition}

\begin{notation}
For $|q|,a,b > 0$ let 
\begin{equation*}
M_{a,b}:=\big\{ [q,r,s] \in \hat{M} \textit{ }\big| \textit{ } |r|^2=a^2, |s|^2=b^2\big\}
\end{equation*}
Note that each $M_{a,b}$ is a submanifold, it is exactly the T-orbit of one copy of a $\C P^{n-2}$ on the q-coordinates and therefore T-invariant. $\hat{M}$ is the disjoint union of all $M_{a,b}$ with $0 < a^2+b^2 < 1$.
\end{notation}

\begin{lemma}
\label{Maberhalten}
Each $F \in Aut^T_{g_0}(M)$ preserves the set $M_{a,b} \cup M_{b,a}$ for any pair $a,b$.
\end{lemma}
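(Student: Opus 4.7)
The plan is to descend to the orbit space $\hat M/T$ and, on the induced isometry $\bar F$, to combine two invariants -- the volume of the $T$-fibres and the conformal scale of the natural $\C P^{n-2}$-fibration -- in order to recover the pair $\{a,b\}$. Since $F$ is $T$-preserving, it sends $T$-orbits to $T$-orbits, so $F(M_{a,b})=\pi^{-1}(\bar F(\bar M_{a,b}))$ with $\bar M_{a,b}:=\pi(M_{a,b})$, and it is enough to prove $\bar F(\bar M_{a,b})\subseteq \bar M_{a,b}\cup \bar M_{b,a}$.

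The first invariant is the fibre volume $\rho:=\|Z_1^*\wedge Z_2^*\|_{g_0}$, a $T$-invariant function on $\hat M$ descending to $\hat M/T$. From $dF_p(Z^*_p)=\Psi_F(Z)^*_{F(p)}$ and condition (2) of the definition of $Aut^T_{g_0}(M)$ (which by polarisation forces $dF_p$ to be an isometry of each $T$-tangent subspace), combined with $\Psi_F\in GL(2,\Z)$ and hence $|\det \Psi_F|=1$, one obtains $\rho\circ F=\rho$, so $\bar F^*\rho=\rho$. The explicit computations of $\|Z_{h,k}^*\|^2$ and $\langle Z_{h,1}^*,Z_{h,2}^*\rangle$ carried out in the proof of Proposition \ref{defeta} yield $\rho^2 = a^2 b^2(1-a^2-b^2) = a^2 b^2 |q|^2$ at $[q,r,s]$ with $a=|r|$, $b=|s|$.

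Next I would show that $\bar F$ preserves the slice foliation $\{\bar M_{a,b}\}$. The $SU(n-1)$-action on $\C P^n$ commutes with the torus action (remarks \ref{complexaction} and \ref{Hopfcommute}), so it descends to an isometric action on $(\hat M/T, g_0^T)$ whose orbits are exactly the slices $\bar M_{a,b}$. Its image is a connected subgroup of $Isom(\hat M/T, g_0^T)$, hence lies in the identity component $G$, which is a normal subgroup. Conjugation by the isometry $\bar F$ therefore preserves $G$ and permutes $G$-orbits. Every $G$-orbit contains an $SU(n-1)$-orbit, so it is a union of slices; since distinct slices are disjoint closed submanifolds of $\hat M/T$ and $G$-orbits are connected (as $G$ is connected), each $G$-orbit must be a single slice. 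This gives $\bar F(\bar M_{a,b})=\bar M_{a',b'}$ for some $(a',b')$.

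To conclude, $\bar F$ restricts to an isometry from $(\bar M_{a,b},|q|^2 g_{FS})$ to $(\bar M_{a',b'},|q'|^2 g_{FS})$; since the sectional curvatures of $c\cdot g_{FS}$ on $\C P^{n-2}$ scale as $c^{-1}$, the two scale factors must agree, so $|q|^2=|q'|^2$ and therefore $a^2+b^2=a'^2+b'^2$. Combined with the $\bar F$-invariance $a^2b^2|q|^2 = a'^2b'^2|q'|^2$, this forces $a^2b^2=a'^2b'^2$, whence $\{a^2,b^2\}=\{a'^2,b'^2\}$ and, by positivity, $\{a,b\}=\{a',b'\}$; hence $F(M_{a,b})\subseteq M_{a,b}\cup M_{b,a}$. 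The main obstacle is the slice-preservation step, which is the one place where the isometry-of-quotient hypothesis interacts non-trivially with the commuting $SU(n-1)$-symmetry; the clean formulation via the identity component of $Isom(\hat M/T,g_0^T)$ sidesteps explicit curvature computations, but if more control were needed one could alternatively characterise the tangent distribution to slices as the highest-multiplicity Ricci eigenspace of $g_0^T$, which is unambiguous at generic points for $n\ge 4$.
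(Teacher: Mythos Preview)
Your fibre--volume argument is correct and coincides with the paper's: once $F$ preserves $g_0$-norms on $T$-orbit tangents, polarisation makes $dF_p$ a linear isometry on each orbit tangent space, hence it preserves the $2$-form norm $\|Z_1^*\wedge Z_2^*\|$, giving the invariance of $|r|^2|s|^2|q|^2$. The disagreement with the paper, and the place where your argument breaks, is the slice--preservation step.

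The inference ``each $G$-orbit is a connected union of disjoint closed slices, hence a single slice'' is a non sequitur. The slices $\bar M_{a,b}$ are the leaves of a smooth foliation of $\hat M/T$; a connected saturated set for a foliation need not be a single leaf. Concretely, nothing you have said rules out a one--parameter subgroup of $G=\mathrm{Isom}_0(\hat M/T,g_0^T)$ whose flow moves points transversally to the slices; the resulting $G$-orbit would then be a connected submanifold fibred by a continuum of slices. Your ``disjoint closed'' observation would only force a single slice if the index set of slices meeting the orbit were discrete, which is exactly what is in question. To make this route work you would have to show independently that $G$ coincides with the image of $SU(n-1)$ (equivalently, that there is no Killing field of $g_0^T$ with a nonzero component in the $(a,b)$ directions), and that is a genuine computation --- essentially the Ricci--eigenspace alternative you mention, which is not free.

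The paper avoids this by exploiting information you have about $F$ on all of $M$, not just about $\bar F$ on $\hat M/T$. Since $F$ is $T$-preserving it must preserve the stratification of $M$ by orbit type; in particular it preserves the fixed--point set $M_0=\{[q,0,0]\}\cong\C P^{n-2}$. One then checks that $M_{a^2+b^2}/T$ is precisely the set of points in $\hat M/T$ at $g_0^T$-distance $\arccos\sqrt{1-a^2-b^2}$ from $M_0/T$ in the metric completion. Because $\bar F$ is an isometry and $F$ fixes $M_0$, this distance function is $\bar F$-invariant, so $|r|^2+|s|^2$ is $F$-invariant. Combined with the fibre--area invariant $|r|^2|s|^2(1-|r|^2-|s|^2)$ this yields the invariance of the pair $\{|r|^2,|s|^2\}$ directly, without ever needing to analyse $\mathrm{Isom}(\hat M/T,g_0^T)$ or curvature of the slice metrics.
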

\begin{proof}
As $F$ is T-preserving we get that $F$ preserves the set $\hat{M}$ as well as the set $M \setminus \hat{M}$. $F$ also fixes the set of all points where the torus orbit degenerates to a single point, this set consists of the two single points were either q and r or q and s are zero and of $M_0=\{ [q,r,s] \in M \mid r=s=0 \}$. As $M_0$ is a copy of $\C P^{n-2}$ and thus higher dimensional than the two single points it has to be preserved, too.\\
Next we will show that $F$ preserves the sets of all points with a fixed value of $a^2+b^2$ in $\hat{M}$. We will call this set $M_{a^2+b^2}$. It is a submanifold of $\hat{M} $ and $\hat{M}$ is the disjoint union of all $M_{a^2+b^2}$ with $0 < a^2+b^2 < 1$. \\
Note that $M_{a^2+b^2}$ is invariant under T, thus $M_{a^2+b^2} \slash T$ is a submanifold of $\hat{M} \slash T$. It consists of the set of all points in $\hat{M} \slash T$ with a fixed value of $a^2+b^2$. 
Let $M_{a^2+b^2} \slash T$ and $M_{a'^2+b'^2} \slash T$ denote two such sets. Then $M_{a^2+b^2} \slash T$ consists exactly of those points in $\hat{M}\slash T$ which have $g_0^T$-distance $d:=|\arccos \sqrt{1-a^2-b^2} - \arccos \sqrt{1-a'^2-b'^2}|$ to $M_{a'^2+b'^2} \slash T$. This can be seen by lifting to the sphere in two steps. On $\C P^n$ the assertion is equivalent to saying that the distance between $M_{a^2+b^2}$ and $M_{a'^2+b'^2}$ is equal to $d$ with respect to the Fubini-Study metric. Lifting to the sphere we have to prove that the distance between the two sets $\big\{ (q,r,s) \in S^{2n+1} \textit{ }\big|\textit{ } |r|^2+|s|^2=a^2+b^2 \big\}= \big\{ (q,r,s) \in S^{2n+1} \textit{ }\big|\textit{ } |q|^2=1-a^2-b^2 \big\}$ and $\big\{ (q,r,s) \in S^{2n+1} \textit{ }\big|\textit{ } |q|^2=1-a'^2-b'^2 \big\}$ is again equal to $d$ with respect to the standard metric $g_{eucl}$. An elementary calculation shows this to be true. \\
The set $M_0 \slash T$ does not lie in $\hat{M}\slash T$ but it lies in its completion as a metric space with respect to the distance $d_{g_0^T}$. Thus, passing to the limit for $a^2+b^2 \ra 0$, we conclude that $M_{a^2+b^2} \slash T$ consists of exactly those points in $\hat{M}\slash T$ which have distance $\arccos \sqrt{1-a^2-b^2}$ to $M_0 \slash T$ in the completion of the metric space $(\hat{M} \slash T, d_{g_0^T})$. Since $\bar{F}$ is an isometry of $(\hat{M} \slash T, d_{g_0^T})$, and since $F$ preserves $M_0$, it follows that $\bar{F}$ preserves $M_{a^2+b^2} \slash T$. This implies that $F$ preserves $M_{a^2+b^2}$.\\
That $F$ preserves the submanifolds $M_{a^2+b^2}$ of $\hat{M}$ means, in other words, that the function $\hat{M} \ni [q,r,s] \mapsto |r|^2+|s|^2 \in \R$ is invariant under $F$. Moreover, consider the $g_0$-area $A([q,r,s])$ of the T-orbit of a point $[q,r,s] \in \hat{M}$. We have:
\begin{eqnarray*}
A([q,r,s])^2 =& ||Z^*_{h,1} \wedge Z^*_{h,2}||^2_{(q,r,s)} \\
=& (||Z_{h,1}^*||^2||Z_{h,2}^*||^2 - \langle Z_{h,1}^*, Z_{h,2}^* \rangle ^2 )_{(q,r,s)} \\
=& |r|^2(1-|r|^2)|s|^2(1-|s|^2)-|r|^4|s|^4 \\
=& |r|^2|s|^2(1-|r|^2-|s|^2) \\
\end{eqnarray*}
Since F preserves the $g_0$-norms of vectors tangent to the T-orbits, it has to preserve $A([q,r,s])$. Thus not only the function $[q,r,s] \mapsto |r|^2+|s|^2$ but also the function $[q,r,s] \mapsto |r|^2|s|^2$ is invariant under $F$. This means that $F$ can map a point in $M_{a,b}$ either to a point in $M_{a,b}$ or to a point in $M_{b,a}$. Therefore the set $M_{a,b} \cup M_{b,a}$ is invariant under $F$, as claimed.
\end{proof}

\begin{proposition}
Let $D$ be defined as $D:=\{ \Psi_F \mid F \in Aut^T_{g_0}(M)\}$. Then \\
\begin{equation*}
D \subset \left\{ \left(\begin{array}{cc} \pm 1 & 0 \\ 0 & \pm 1 \end{array}\right),
        \left(\begin{array}{cc} 0 & \pm 1  \\ \pm 1 & 0 \end{array}\right) \right\} =\mcE
\end{equation*}
In other words the group of induced automorphisms of $\mfz$ is contained in the subgroup of the automorphism group of $\mfz$ of order eight used in the definition \ref{defj} to define non equivalent maps $j, j'$. 
\end{proposition}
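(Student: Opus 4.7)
My plan is to combine Lemma~\ref{Maberhalten} with the norm-preservation clause in the definition of $Aut^T_{g_0}(M)$. Lemma~\ref{Maberhalten} constrains where $F$ can send each $M_{a,b}$, while the norm-preservation, via the relation $F_*(Z^*) = \Psi_F(Z)^* \circ F$ from Notation~\ref{Tpreserving}, turns into explicit quantitative equations for $\Psi_F$ on the two-dimensional tangent space to the $T$-orbits.

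First, I would note that $M_{a,b}$ is connected (it is the $T$-orbit of a copy of $\C P^{n-2}$, or a $2$-torus in the edge case $n=2$) and that $M_{a,b}$, $M_{b,a}$ are disjoint whenever $a \neq b$. Combining this with Lemma~\ref{Maberhalten} and the continuity of $F$ yields a clean dichotomy for each such pair: either $F(M_{a,b}) \subseteq M_{a,b}$ (Case~1) or $F(M_{a,b}) \subseteq M_{b,a}$ (Case~2).

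Second, writing $\Psi_F(Z_k) = \alpha_k Z_1 + \beta_k Z_2$ for $k=1,2$, I would invoke that $F$ preserves $g_0$-norms of vectors tangent to $T$-orbits --- and hence, by polarization, their inner products --- to obtain, for $k,l \in \{1,2\}$:
\begin{equation*}
g_0(Z_k^*, Z_l^*)_{[p]} = g_0\bigl(\Psi_F(Z_k)^*, \Psi_F(Z_l)^*\bigr)_{F([p])}.
\end{equation*}
Plugging in the formulas already computed ($\|Z_1^*\|^2 = a^2(1-a^2)$, $\|Z_2^*\|^2 = b^2(1-b^2)$, $\langle Z_1^*, Z_2^* \rangle = -a^2 b^2$), both sides become explicit polynomials in $(a,b)$, with the right-hand side evaluated at $(a,b)$ in Case~1 and at $(b,a)$ in Case~2.

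Third, letting $(a,b)$ range over an open set and comparing polynomial coefficients forces, in Case~1, $\alpha_1 = \pm 1$, $\beta_2 = \pm 1$, and $\beta_1 = \alpha_2 = 0$, i.e.\ $\Psi_F = \mathrm{diag}(\pm 1, \pm 1)$; in Case~2, the analogous comparison gives an anti-diagonal $\Psi_F$ with entries $\pm 1$. Either way $\Psi_F \in \mcE$. The step I expect to be the main obstacle is verifying that the Case~1/Case~2 split is globally consistent for a given $F$: since $\Psi_F$ is a single globally defined automorphism, its diagonal-vs-anti-diagonal shape rules out any mixture of cases across different $(a,b)$, so it suffices to fix one convenient generic $[p]$ and read off the structure there.
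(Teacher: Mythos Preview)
Your approach is correct but takes a different route from the paper. The paper does not vary $(a,b)$ and compare polynomial coefficients; instead it restricts to a single diagonal submanifold $L=M_{a,a}$ with $a>0$ small and argues geometrically: at any $[p]\in L$ the vectors $(Z_1^*)_{[p]}$ and $(Z_2^*)_{[p]}$ have equal $g_0$-length and enclose an angle $\arccos\frac{-a^2}{1-a^2}\in(\pi/3,\pi/2)$, so $\{\pm Z_1^*,\pm Z_2^*\}$ are precisely the shortest nonzero vectors of the lattice defining the flat $T$-orbit. Since $F$ preserves $M_{a,a}$ (Lemma~\ref{Maberhalten} with $a=b$) and preserves $g_0$ on the orbit tangent spaces, $F_*$ must permute this set of shortest vectors, whence $\Psi_F(Z_k)\in\{\pm Z_1,\pm Z_2\}$ directly. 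The paper's lattice argument is shorter and sidesteps your Case~1/Case~2 bookkeeping entirely, at the price of invoking a fact about shortest lattice vectors; your polynomial-identity method is more hands-on and makes explicit how the two distinct orbit-norm functions pin down the matrix entries. One minor correction to your sketch: ``fix one convenient generic $[p]$'' in your last sentence is not sufficient by itself --- what you actually need, and what your third paragraph correctly uses, is an \emph{open set} of parameters $(a,b)$ on which a single case holds. This is available because the assignment $(a,b)\mapsto\text{Case}$ is locally constant on $\{a\neq b\}$ by continuity of $F$, so each connected component of $\{a\neq b\}$ lies entirely in one case.
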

\begin{proof}
$D$ is discrete as it preserves the lattice $\mcL$ associated to the torus. In fact it is a subgroup of the discrete group $\big\{ \Psi \in Aut(\mfz) \hspace{0.8mm} \big| \hspace{0.8mm} \Psi(\mcL)=\mcL \big\} \cong \big\{ A \in M(2,\Z) \hspace{0.8mm} \big| \hspace{0.8mm} det(A)=\pm 1 \big\}$. \\
Let $F \in Aut^T_{g_0}(M)$. We will restrict our attention to one of the submanifolds $M_{a,a}$ of $\hat{M}$. 
We have that $F$ preserves $M_{a,a}$ by lemma \ref{Maberhalten} in the special case of $a=b$.\\
Let $[p]=[q,r,s] \in M_{a,a}$ with $0 < a < \frac{1}{\sqrt{2}}$. 
Let $(Z_k^*)_{[p]}$, $k=1,2$, denote the vector fields induced by the torus action as in notation \ref{VF*} on $\C P^n$ in the point $[p]$.
Then we have 
\begin{eqnarray*}
\varangle ((Z_1^*)_{[p]}, (Z_2^*)_{[p]})
=& \arccos &\frac{\langle (Z_1^*)_{[p]}, (Z_2^*)_{[p]} \rangle}{||(Z_1^*)_{[p]}||\cdot ||(Z_2^*)_{[p]}||} \\
=& \arccos &\frac{\langle Z_{h,1}^*, Z_{h,2}^* \rangle_p}{||Z_{h,1}^*||_p \cdot ||Z_{h,2}^*||_p} \\
=& \arccos &\frac{-a^2 \cdot a^2}{a\sqrt{1-a^2} \cdot a\sqrt{1-a^2}} \\
=& \arccos &\frac{-a^2}{1-a^2} \\
\end{eqnarray*}
For sufficiently small $a>0$ this angle will be greater than $\frac{\pi}{3}$. Choose an $a \in (0, \frac{1}{\sqrt{2}})$ with this property. \\
Consider the T-orbit of $[p]$. The metric induced there by $g_{\bar{\eta}}$ is the same as the one induced by $g_0$ by proposition \ref{riemsubmersion}. The T-orbit endowed with this metric is a flat torus isometric to $span \{(Z_1^*)_{[p]}, (Z_1^*)_{[p]} \}$ divided by the lattice generated by $(Z_1^*)_{[p]}$ and  $(Z_2^*)_{[p]}$. \\
Since these two vectors are of equal length and the angle between them is in $(\frac{\pi}{3}, \frac{\pi}{2})$, they are, together with their negatives, exactly the shortest vectors in this lattice. In consequence the flow lines induced by $(Z_1^*)_{[p]}$ and  $(Z_2^*)_{[p]}$ are the shortest geodesic loops in the T-orbit of $[p]$.\\
As $F$ preserves $M_{a,a}$, it follows that $(F_*)_{[p]} Z_k^* \in \{ \pm (Z_1^*)_{F([p])}, \pm (Z_2^*)_{F([p])}\}$ for $k=1,2$. But we also have $(F_*)_{[p]} Z_k^*= (\Psi_F(Z_k))^*_{F([p])}$ by notation \ref{Tpreserving}. Thus we have $\Psi_F(Z_k) \in \{ \pm Z_1, \pm Z_2 \} $ for $k=1,2$ and the statement follows. \\
\end{proof}

We need the exterior derivative $d\bar\eta$ of $\bar\eta$ for the non isometry proof. 
\begin{lemma}
\label{deta}
We have 
\begin{eqnarray*}
d\bar\eta^k([X],[Y])_{[q,r,s]}  =& d\eta^k(X,Y)_{(q,r,s)} \\
=& 2\langle X_q,q \rangle \langle j_{Z_k} q, Y_q \rangle 
- 2 \langle Y_q,q \rangle \langle j_{Z_k} q, X_q \rangle \\
&+2|q|^2\langle j_{Z_k} X_q,Y_q \rangle \\
&-2\langle j_{Z_k} X_q,iq \rangle\langle iq, Y_q \rangle 
+ 2\langle j_{Z_k} Y_q,iq \rangle\langle iq, X_q \rangle \\
& -2\langle j_{Z_k} q, iq \rangle \langle iX_q,Y_q \rangle
\end{eqnarray*}
Here $X$ and $Y$ again denote the $S^1$-invariant Hopf-horizontal lifts of $[X]$ and $[Y]$. We denote the components of the vector field $X$ in the point $(q,r,s)$ by $X_q, X_r$ and $X_s$ and similarly for $Y$.
\end{lemma}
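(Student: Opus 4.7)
The plan is to reduce the lemma to a direct calculation on $S^{2n+1}$. The first equality $d\bar\eta^k([X],[Y])_{[q,r,s]} = d\eta^k(X,Y)_{(q,r,s)}$ follows immediately from the identity $\Pi^*\bar\eta = \eta$ established in the proof of Proposition \ref{defeta}, combined with naturality of the exterior derivative $\Pi^*(d\bar\eta) = d\eta$. Evaluating at the $S^1$-invariant Hopf-horizontal lifts $X,Y$ of $[X],[Y]$ and using $\Pi_* X = [X]$, $\Pi_* Y = [Y]$ gives the stated equality pointwise.

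For the second equality I would use the Cartan formula
\begin{equation*}
d\eta^k(X,Y) = X\cdot\eta^k(Y) - Y\cdot\eta^k(X) - \eta^k([X,Y]).
\end{equation*}
Since $d\eta^k$ is a tensor, I am free to choose any convenient extensions of $X,Y$ to vector fields on the ambient $\C^{n+1}$. I would extend them as constant (parallel) vector fields, so that the components $X_q, Y_q$ are independent of the basepoint and $[X,Y] = 0$. The bracket term then drops out, and everything reduces to differentiating the explicit expression
\begin{equation*}
\eta^k(Y)_p = |q|^2\langle j_{Z_k}q, Y_q\rangle - \langle j_{Z_k}q, iq\rangle\langle iq, Y_q\rangle
\end{equation*}
in the $X$-direction via the Leibniz rule, and symmetrically for $\eta^k(X)$. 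The only non-constant factors are powers of $q$ and $\bar q$, so the computation uses only the elementary identities $X\cdot |q|^2 = 2\langle X_q, q\rangle$, $X\cdot\langle j_{Z_k}q, Y_q\rangle = \langle j_{Z_k}X_q, Y_q\rangle$, and $X\cdot\langle j_{Z_k}q, iq\rangle = \langle j_{Z_k}X_q, iq\rangle + \langle j_{Z_k}q, iX_q\rangle$.

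Subtracting $Y\cdot\eta^k(X)$ from $X\cdot\eta^k(Y)$ then produces a collection of bilinear terms in $X_q, Y_q$ that I would consolidate using three algebraic facts: (a) skew-Hermiticity $\langle j_{Z_k}u, v\rangle = -\langle u, j_{Z_k}v\rangle$, giving $\langle j_{Z_k}X_q, Y_q\rangle - \langle j_{Z_k}Y_q, X_q\rangle = 2\langle j_{Z_k}X_q, Y_q\rangle$ and hence the coefficient $2|q|^2$ on the middle term; (b) the identity $\langle j_{Z_k}q, iX_q\rangle = \langle j_{Z_k}X_q, iq\rangle$, obtained by combining the $\C$-linearity of $j_{Z_k}$ with $\langle iu, v\rangle = -\langle u, iv\rangle$ and the symmetry of the real inner product, which doubles the pair $\langle j_{Z_k}X_q, iq\rangle\langle iq, Y_q\rangle$ and its $X \leftrightarrow Y$ counterpart; and (c) the skew-symmetry $\langle iY_q, X_q\rangle = -\langle iX_q, Y_q\rangle$, which likewise doubles the final term $\langle j_{Z_k}q, iq\rangle\langle iX_q, Y_q\rangle$. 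The result assembles exactly into the claimed formula.

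I do not anticipate a real obstacle here; this is a bookkeeping lemma. The only place requiring mild care is tracking how the real inner product interacts with multiplication by $i$ and with $j_{Z_k} \in \mfs\mfu(n-1)$, so that the asymmetric cancellations produce the correct factor of $2$ on every summand. A minor point to verify at the end is that, although the extensions of $X, Y$ to constant vector fields on $\C^{n+1}$ need not be tangent to $S^{2n+1}$, the tensorial value of $d\eta^k$ at a point $p \in S^{2n+1}$ is insensitive to this and only depends on $X_p, Y_p \in T_p S^{2n+1}$, which is enough for the formula as stated.
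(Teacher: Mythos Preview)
Your proposal is correct and follows essentially the same route as the paper: pull back via $\eta=\Pi^*\bar\eta$ to reduce to $S^{2n+1}$, extend $\eta^k$ and the vectors $X,Y$ constantly to the ambient $\C^{n+1}$ so that $[X,Y]=0$, apply the Cartan formula, and differentiate term by term. The paper carries out exactly this computation (writing $X\cdot\eta^k(Y)$ as $\frac{d}{dt}\big|_{t=0}\eta^k(Y)_{p+tX}$), and your explicit bookkeeping with the identities (a)--(c) matches the simplifications the paper performs between its intermediate and final expressions.
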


\begin{proof}
By the proof of proposition \ref{defeta} we have $\eta = \Pi^* \bar\eta $.
This implies $d\eta = \Pi^* d\bar\eta$; that is,
\begin{equation*}
d\bar\eta^k([X],[Y])_{[q,r,s]} = d\eta^k(X,Y)_{(q,r,s)} 
\end{equation*}
The remainder of the proof is a straightforward calculation. \\
Recall that 
\begin{equation*}
\eta^k(X)_p =|q|^2\langle j_{Z_k} q, X_q \rangle - \langle j_{Z_k} q, iq \rangle \langle iq,X_q \rangle 
\end{equation*}
for $k=1,2$; $p=(q,r,s) \in S^{2n+1}$ and $X \in T_pS^{2n+1}$. \\
Extend $\eta^k$ to a 1-form on $\R^{2n+2} \cong \C^{n+1}$ by the same formula. Let $X,Y \in T_pS^{2n+1}$, and extend them to constant vector fields on $\C^{n+1}$. Then $[X,Y]=0$, hence \\ 
\begin{eqnarray*}
d\eta^k (X,Y)_p=& X_p(\eta^k(Y))-Y_p(\eta^k(X))-\eta^k([X,Y])_p \\
=& \frac{d}{dt}\big|_{t=0}\big( \eta^k(Y)_{p+tX} - \eta^k(X)_{p+tY} \big)\\
%%%%%%%%%
=& 2\langle X_q,q \rangle \langle j_{Z_k} q, Y_q \rangle 
 + |q|^2\langle j_{Z_k} X_q,Y_q \rangle \\
& - \langle j_{Z_k} X_q,iq \rangle\langle iq, Y_q \rangle 
- \langle j_{Z_k} q,iX_q \rangle\langle iq, Y_q \rangle \\
& -\langle j_{Z_k} q, iq \rangle \langle iX_q,Y_q \rangle \\
& -2\langle Y_q,q \rangle \langle j_{Z_k} q, X_q \rangle 
 - |q|^2\langle j_{Z_k} Y_q,X_q \rangle \\
&+ \langle j_{Z_k} Y_q,iq \rangle\langle iq, X_q \rangle 
+ \langle j_{Z_k} q,iY_q \rangle\langle iq, X_q \rangle \\
& +\langle j_{Z_k} q, iq \rangle \langle iY_q,X_q \rangle \\
%%%%%%%%%
=& 2\langle X_q,q \rangle \langle j_{Z_k} q, Y_q \rangle 
- 2 \langle Y_q,q \rangle \langle j_{Z_k} q, X_q \rangle \\
&+2|q|^2\langle j_{Z_k} X_q,Y_q \rangle \\
&-2\langle j_{Z_k} X_q,iq \rangle\langle iq, Y_q \rangle 
+ 2\langle j_{Z_k} Y_q,iq \rangle\langle iq, X_q \rangle \\
& -2\langle j_{Z_k} q, iq \rangle \langle iX_q,Y_q \rangle \\
\end{eqnarray*}
\end{proof}

\begin{remark}
\label{restriction}
We will restrict our attention to one of the submanifolds $L:=M_{a,a}$ and observe what the connection form and the induced curvature form will look like. We will denote the restriction of the metric $g_{\bar{\eta}}$ to $L$ by $g_{\bar{\eta}}$ again. \\
$L$ is exactly the torus orbit of one copy of a $\C P^{n-2}$ on the q-coordinates. It is a submanifold and a principal T-bundle so we could define a connection form directly without considering the ambient manifold $\hat{M}$. We could also look at the restriction of the connection form of $\hat{M}$ to $L$. The definition of a connection form shows that we would get the same $\mfz$-valued 1-form on $L$ in both cases, we will denote it by $\omega_{\bar{\eta}}^L$. \\
We again have two possibilities for the induced curvature form on $L \slash T$ defining it as either
the curvature form associated with $\omega_{\bar{\eta}}^L$, or the restriction of the curvature form $\Omega_{\bar{\eta}}$ to $L \slash T$. But since restriction (i.e., pullback by inclusion) commutes with exterior derivation, we have $d(\omega_{\bar{\eta}}^L)=(d\omega_{\bar{\eta}})^L$; hence also the induced forms on $L \slash T$ are equal. \\
Thus the equation $\pi^* \Omega_{\bar{\eta}}=d\omega_{\bar{\eta}}$ from notation \ref{connectionform} carries over to $L$ and we have 
\begin{eqnarray*}
\pi^* \Omega^L_{\bar{\eta}}=d\omega^L_{\bar{\eta}} =& d\omega^L_0+d\bar{\eta}^L \\
\pi^* \Omega_{\bar{\eta}}^L =& \pi ^* \Omega_0^L + d\bar{\eta}^L 
\end{eqnarray*}
$L$ is a submanifold of codimension 2. For any point $[p]=[q,r,s]$ in $L$ its tangent space $T_{[p]}L$ consists of those $[X] \in T_{[p]}\hat{M}$ with
\begin{equation*}
\langle X_q, q \rangle=0 \hspace{1cm} \langle X_r, r \rangle=0 \hspace{1cm} \langle X_s, s \rangle=0
\end{equation*}
for the Hopf-horizontal lift $X$ of $[X]$ to $p=(q,r,s)$.
The sum of these three equations is $\langle X, p \rangle=0$, which is true for all tangent vectors $X \in T_pS^{2n+1}$, so we have only two independent equations. Inserting these into our formula for $d\bar\eta$ we get
\begin{eqnarray*}
d\bar\eta^k([X],[Y])_{[q,r,s]}=& 2|q|^2\langle j_{Z_k} X_q,Y_q \rangle \\
&-2\langle j_{Z_k} X_q,iq \rangle\langle iq, Y_q \rangle 
+ 2\langle j_{Z_k} Y_q,iq \rangle\langle iq, X_q \rangle \\
& -2\langle j_{Z_k} q, iq \rangle \langle iX_q,Y_q \rangle
\end{eqnarray*}
for $k=1,2$ and for all $[X], [Y] \in T_{[q,r,s]}L$, where $X$ and $Y$ again denote the Hopf-horizontal lifts of $[X]$ and $[Y]$ to $T_{(q,r,s)}S^{2n+1}$. This formula describes the $\mfz$-valued 1-form $d\bar{\eta}^L$ on $L$.
\end{remark}

\begin{remark}
\label{LKrmnull}
We will compute $\omega_0^L$ and $d\omega_0^L$. Let again $[p]=[q,r,s] \in L = M_{a,a}$ with $0 < a < \frac{1}{\sqrt{2}}$ and $[X]\in T_{[p]}L \subset T_{[p]}\hat{M}$. Let $X$ be the Hopf-horizontal lift of $[X]$ to $T_p S^{2n+1}$. By definition of the connection form $\omega_0$, we have 
\begin{eqnarray*}
\omega_0^L([X])_{[p]} =& \omega_0([X])_{[p]} \\
=& g_0([X], \frac{[Z_{h,1}^*]}{||[Z_{h,1}^*]||^2})Z_1 
+ g_0([X], \frac{[Z_{h,2}^*]}{||[Z_{h,2}^*]||^2})Z_2 \\
=& \big\langle X, \frac{Z_{h,1}^*}{||Z_{h,1}^*||^2} \big\rangle Z_1 
+ \big\langle X, \frac{Z_{h,2}^*}{||Z_{h,2}^*||^2} \big\rangle Z_2 \\
=& \big\langle X, \frac{(0,i r,0)- |r|^2 ip}{|r|^2 (1-|r|^2)} \big\rangle Z_1 
+ \big\langle X, \frac{(0,0,i s)- |s|^2 ip}{|s|^2 (1-|s|^2)} \big\rangle Z_2 \\
=& \big\langle X_r, \frac{i r}{|r|^2 (1-|r|^2)} \big\rangle Z_1 
+ \big\langle X_s, \frac{i s}{|s|^2 (1-|s|^2)} \big\rangle Z_2 \\
\end{eqnarray*}
In the last equation we used $\langle X, ip \rangle =0 $. \\
Thus one could see $\omega_0^L$ as the sum of two 1-forms. The first one depends only on the $r$ component of the point $[p]$ and applies to vector fields that have only the component $X_r$ in $T_{[p]}L$, the second one depends only on $s$.
As we are looking at $L$, we have $\langle X_r,r \rangle=0$ for all $[X] \in T_{[p]}L$, that means this component of $T_{[p]}L$ has only one real dimension. In consequence the r-part of $d\omega_0^L$, a 2-form on this space, has to be zero and similarly for the s-part. Hence we have
\begin{eqnarray*}
d\omega_0^L=0
\end{eqnarray*}
and consequently
\begin{eqnarray*}
\Omega_0^L=0
\end{eqnarray*}
This will be useful in the proof of theorem \ref{nonisothm}.
\end{remark}

We now have the means to formulate the two theorems that will grant us non isometry of the constructed metrics. The first is a 
general result working for all metrics constructed using the admissible 1-forms introduced in section 2. The second one is the application of this theorem to our setting of complex projective space. \\

\begin{theorem}
\cite{Schueth01b}
Let $\eta$, $\eta'$ be admissible 1-forms on M such that
$\Omega_{\eta}$ and $\Omega_{\eta'}$ satisfy the condition: \\
(G) No nontrivial 1-parameter group in $\overline{Aut}_{g_0}^T(M)$ preserves $\Omega_{\eta}$. \\
Furthermore, assume that \\
(N) $\Omega_{\eta} \notin D \circ \overline{Aut}_{g_0}^T(M)^*
\Omega_{\eta'}$. \\
Then the manifolds $(M,g_{\eta})$ and $(M,g_{\eta'})$ are
not isometric.
\end{theorem}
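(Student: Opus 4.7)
The plan is to argue by contradiction. Suppose $F:(M,g_{\eta})\to(M,g_{\eta'})$ is an isometry; from it I will manufacture an element of $Aut^T_{g_0}(M)$ whose effect on the connection form transforms $\Omega_{\eta'}$ to $\Omega_\eta$ up to composition with an element of $D$, contradicting (N).

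The first step is to reduce to a $T$-preserving isometry. Consider $T'':=F^{-1}TF$, a torus of isometries of $(M,g_{\eta})$ of the same dimension as $T$. I want to find $\phi\in Isom(M,g_\eta)$ with $\phi T\phi^{-1}=T''$ so that $\tilde F:=F\phi$ satisfies $\tilde F^{-1}T\tilde F=T$. This is where (G) enters: I would show that $T$ is a maximal torus of $Isom^0(M,g_\eta)$. Indeed, if $T^*\supsetneq T$ were a strictly larger torus in $Isom^0(M,g_\eta)$, then for any $Z_0\in\mathfrak t^*\setminus\mfz$ the 1-parameter group $\exp(sZ_0)$ would commute with $T$, preserve $g_\eta$ and hence T-orbits, preserve $g_0$-norms on such orbits (since $g_0$ and $g_\eta$ agree there by admissibility), and descend to an isometry of $g_\eta^T=g_0^T$ via Proposition \ref{riemsubmersion}. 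This gives a nontrivial 1-parameter subgroup of $\overline{Aut}^T_{g_0}(M)$; since it commutes with $T$ its induced automorphism of $\mfz$ is trivial, so it fixes $\omega_\eta$ and hence $\Omega_\eta$, contradicting (G). Thus $T$ is maximal, $T''$ is maximal (by equality of ranks), and conjugacy of maximal tori in the compact connected group $Isom^0(M,g_\eta)$ yields the desired $\phi$.

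Next I would verify $\tilde F\in Aut^T_{g_0}(M)$. $T$-preservation is just established. For $U$ tangent to a T-orbit, $\eta(U)=0$ gives $g_\eta(U,U)=g_0(U,U)$; likewise for $\tilde F_*U$ with $\eta'$ and $g_{\eta'}$; since $\tilde F$ is a $(g_\eta,g_{\eta'})$-isometry, it preserves $g_0$-lengths of T-orbit vectors. Proposition \ref{riemsubmersion} gives $g_\eta^T=g_0^T=g_{\eta'}^T$, so the induced map $\bar{\tilde F}$ on $\hat M/T$ is a $g_0^T$-isometry. Hence $\bar{\tilde F}\in\overline{Aut}^T_{g_0}(M)$ and $\Psi_{\tilde F}\in D$.

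Finally I would extract the curvature relation. Because $\tilde F$ is a T-preserving isometry from $g_\eta$ to $g_{\eta'}$, it maps the $g_\eta$-horizontal distribution on $\hat M$ to the $g_{\eta'}$-horizontal one and satisfies $\tilde F_*Z^*=\Psi_{\tilde F}(Z)^*\circ\tilde F$; the defining property of the connection forms in Notation \ref{connectionform} then forces $\tilde F^*\omega_{\eta'}=\Psi_{\tilde F}\circ\omega_\eta$. Taking exterior derivatives, pushing down to $\hat M/T$ via $\pi\circ\tilde F=\bar{\tilde F}\circ\pi$, and using injectivity of $\pi^*$ on forms on the base, I obtain $\bar{\tilde F}^*\Omega_{\eta'}=\Psi_{\tilde F}\circ\Omega_\eta$, i.e.\ $\Omega_\eta\in D\circ\overline{Aut}^T_{g_0}(M)^*\Omega_{\eta'}$, contradicting (N). The main obstacle is the conjugacy step of paragraph two; once maximality of $T$ in $Isom^0(M,g_\eta)$ is extracted from (G), the remaining verifications are essentially formal consequences of Proposition \ref{riemsubmersion} and the defining properties of $\omega_\eta$.
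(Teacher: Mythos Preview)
Your proof is correct and follows essentially the same route as the paper's (sketched) argument: use (G) to establish that $T$ is a maximal torus in $Isom(M,g_\eta)$, conjugate to make the assumed isometry $T$-preserving, and then derive $\bar F^*\Omega_{\eta'}=\Psi_F\circ\Omega_\eta$ to contradict (N). Your write-up in fact supplies more detail than the paper's sketch, including the verification that $\tilde F\in Aut^T_{g_0}(M)$ and the reason the descended 1-parameter group in the maximality step is nontrivial.
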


We shortly sketch the structure of the proof: \\
Condition (G) implies that $T$ is a maximal torus in $Isom(M, g_{\eta})$. If $T$ were not maximal then the additional dimension could be used to build a 1-parameter group as in condition (G). \\
Now suppose $F$ were an isometry from $(M,g_{\eta})$ to $(M,g_{\eta'})$. We denote the induced isometry of $(\hat{M} \slash T, g_0^T)$ by $\bar{F}$. All maximal tori in $Isom(M, g_{\eta})$ are conjugate and as $T$ is one of them we can assume $F$ to be T-preserving. This implies $F^*\omega_{\eta'}=\Psi_F \circ \omega_{\eta}$, which in turn implies $\bar{F}^*\Omega_{\eta'}=\Psi_F \circ \Omega_{\eta}$. This contradicts (N). \\

\begin{theorem}
\label{nonisothm}
Let  $j$, $j' : \mfz \ra \mfs \mfu(n-1)$ be two linear maps and let $\bar\eta$, $\bar\eta'$ be the induced 1-forms on $M=\C P^n$ as in proposition \ref{defeta}, then
\begin{enumerate}
\item If $j$, $j'$ are not equivalent, then $\Omega_{\bar\eta}$ and
$\Omega_{\bar\eta'}$ satisfy condition \\
(N) $\Omega_{\bar\eta} \notin D \circ \overline{Aut}_{g_0}^T(M)^*
\Omega_{\bar\eta'}$

\item If $j$ is generic, then $\Omega_{\bar\eta}$ satisfies
condition \\
(G) No nontrivial 1-parameter group in $\overline{Aut}_{g_0}^T(M)$ preserves $\Omega_{\bar\eta}$ \\
\end{enumerate}
In particular, if $j$ and $j'$ are not equivalent and $j$ is generic, then the isospectral manifolds $(\C P^n, g_{\bar{\eta}})$ and $(\C P^n, g_{\bar{\eta}'})$ are not isometric.
\end{theorem}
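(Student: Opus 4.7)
The plan is to reduce both conditions (N) and (G) to the explicit $\mfz$-valued 2-form $d\bar\eta^L$ by restricting to the totally geodesic submanifolds $L = M_{a,a}$ (with $0 < a < 1/\sqrt{2}$), where Remark \ref{LKrmnull} gives $\Omega_0^L = 0$. Consequently $\pi^*\Omega_{\bar\eta}^L = d\bar\eta^L$, and the full formula in Remark \ref{restriction} applies. The quotient $M_{a,a}/T$ identifies, for each fixed $a$, with $\C P^{n-2}$ carrying a multiple of the Fubini--Study metric (see notation \ref{connectionform}), so by remark \ref{isometrie} every induced isometry of $M_{a,a}/T$ comes from a matrix $A \in U(n-1) \cup U(n-1)\circ Q$ modulo its center.

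To prove (N) I argue by contraposition: assume $\Omega_{\bar\eta} = \Psi_F \circ \bar F^* \Omega_{\bar\eta'}$ for some $F \in Aut^T_{g_0}(M)$ with $\Psi_F \in D \subset \mcE$, and show $j \cong j'$. By Lemma \ref{Maberhalten}, and after possibly precomposing $F$ with the $r \leftrightarrow s$ swap (which modifies $\Psi_F$ only by an element of $\mcE$), I may assume $F$ preserves $M_{a,a}$ for a nontrivial range of $a$. The induced isometry $\bar F$ of $M_{a,a}/T \cong \C P^{n-2}$ is realized by some $A \in U(n-1) \cup U(n-1)\circ Q$, which after dividing by an $(n-1)$-th root of its determinant can be taken in $SU(n-1) \cup SU(n-1)\circ Q$, as in definition \ref{defj}. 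Feeding the formula of Remark \ref{restriction} into the identity $\Psi_F \circ d\bar\eta^L = \bar F^* d\bar\eta'^L$ at points $(q,r,s) \in L$ and allowing both $q$ and $a$ to vary, the three structurally different pieces of $d\bar\eta^L$ (the bilinear term $2|q|^2 \langle j_{Z_k}X_q, Y_q\rangle$, the mixed terms weighted by $\langle \cdot, iq\rangle$, and the $\langle iX_q, Y_q\rangle$ term weighted by $\langle j_{Z_k}q, iq\rangle$) carry independent information and collectively yield $j'_{\Psi_F(Z_k)} = A j_{Z_k} A^{-1}$ for $k=1,2$, i.e.\ the equivalence relation of definition \ref{defj}, contradicting the hypothesis. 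The main obstacle is precisely this decoupling step: one must show that the different tensorial terms cannot conspire to cancel, and the cleanest way is to fix $q$ with $\langle j_{Z_k}q, iq\rangle = 0$ for a suitable choice (achievable by genericity of the point) to isolate the pure conjugation relation, then perturb $q$ to recover the remaining data.

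To prove (G), let $\phi_t$ be a smooth 1-parameter group in $\overline{Aut}^T_{g_0}(M)$ preserving $\Omega_{\bar\eta}$. Since $\Psi_{\phi_t} \in D \subset \mcE$ is discrete and $\Psi_{\phi_0} = \mathrm{id}$, by continuity $\Psi_{\phi_t} = \mathrm{id}$ for all $t$. Restricting to $M_{a,a}/T \cong \C P^{n-2}$ yields a 1-parameter subgroup of $Isom(\C P^{n-2})$, hence generated by some $A \in \mfs\mfu(n-1)$ (working modulo the center of $U(n-1)$). Differentiating the invariance $\phi_t^* d\bar\eta^L = d\bar\eta^L$ at $t=0$ and reading off the formula of Remark \ref{restriction} gives $[A, j_{Z_k}] = 0$ for $k = 1, 2$, so the genericity of $j$ forces $A = 0$ and the 1-parameter group is trivial. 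With (N) and (G) established, the Schueth nonisometry theorem cited above immediately implies that $(\C P^n, g_{\bar\eta})$ and $(\C P^n, g_{\bar\eta'})$ are not isometric.
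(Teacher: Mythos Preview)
Your overall architecture matches the paper's: restrict to $L=M_{a,a}$, use $\Omega_0^L=0$ so that $\pi^*\Omega_{\bar\eta}^L=d\bar\eta^L$, identify $\bar F^L$ with some $A\in SU(n-1)\cup SU(n-1)\circ Q$, and read off an algebraic relation between $j$ and $j'$ from the explicit formula in Remark~\ref{restriction}. The part~(G) argument is also essentially the same, though the paper obtains $j_Z=A_t^{-1}j_ZA_t$ directly for all $t$ rather than differentiating at $t=0$; either works. Two minor points: the $r\leftrightarrow s$ swap is unnecessary since Lemma~\ref{Maberhalten} already gives $F(M_{a,a})\subset M_{a,a}\cup M_{a,a}=M_{a,a}$; and varying $a$ is not needed --- one fixed $a$ suffices.

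The real divergence is in what you call the decoupling step, and here your sketch is both different from the paper and underspecified. You propose to kill the term $\langle j_{Z_k}q,iq\rangle\langle iX_q,Y_q\rangle$ by choosing the \emph{base point} $q$ with $\langle j_{Z_k}q,iq\rangle=0$; but this still leaves the two mixed terms $\langle j_{Z_k}X_q,iq\rangle\langle iq,Y_q\rangle$ alive, so you have not isolated the pure conjugation relation, and the subsequent ``perturb $q$'' is not explained. The paper does the opposite and it is considerably cleaner: it restricts the \emph{tangent vectors} to satisfy $\langle X_q,iq\rangle=\langle Y_q,iq\rangle=0$, which kills the two mixed terms outright and leaves only
\[
|q|^2\langle \nu^k X_q,Y_q\rangle-\langle \nu^k q,iq\rangle\langle iX_q,Y_q\rangle=0,
\qquad \nu^k:=j_{Z_k}-A^{-1}j'_{^t\Psi(Z_k)}A.
\]
Then the single substitution $Y_q=iX_q$ gives $\langle i\nu^k X_q,X_q\rangle/|X_q|^2=\langle i\nu^k q,q\rangle/|q|^2$ for all $X_q\perp\mathrm{span}\{q,iq\}$; applying this to an eigenbasis of the hermitian operator $i\nu^k$ forces all eigenvalues equal, hence $\nu^k$ is a scalar multiple of the identity, hence zero since $\nu^k\in\mfs\mfu(n-1)$. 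So the paper never needs to ``separate three structurally different pieces'' or perturb anything: one restriction on tangent vectors plus one clever choice $Y_q=iX_q$ and a trace argument finish it. Your plan could probably be pushed through, but as written the key step is a promissory note rather than an argument.
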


\begin{proof}
\item 1. We will use an indirect proof. Suppose (N) were
not satisfied. Let $\Psi \in D$ and $\bar F \in \overline{Aut}_{g_0}^T(M)$ such
that \\
$(*) \qquad \Omega_{\bar\eta} = \Psi \circ \bar F ^*\Omega_{\bar\eta'} $\\
Let $\omega_{\bar\eta}^L$ and $\Omega_{\bar\eta}^L$ denote the induced connection and curvature forms on $L$, and similarly for $\bar\eta'$, where $L$ is again one of the submanifolds $M_{a,a}$ with $0 < a < \frac{1}{\sqrt{2}}$.
By remark \ref{restriction} and remark \ref{LKrmnull} we have 
\begin{equation*}
\pi^*\Omega^L_{\bar{\eta}}=d\bar{\eta}^L \quad \textit{ and } \quad \pi^*\Omega^L_{\bar{\eta}'}=d\bar{\eta}'^L
\end{equation*}
on $L \subset \hat{M} \subset \C P^n$.
Let $F$ be a map in $Aut^T_{g_0}(M)$ inducing $\bar{F}$. \\
Recall that $F$ preserves $L$ by lemma \ref{Maberhalten} applied to $a=b$. Thus the isometry $\bar{F}$ of $(\hat{M}\slash T, g_0^T)$ restricts to an isometry $\bar{F}^L$ of $(L \slash T, g_0^T)$. Equation $(*)$ now implies 
\begin{equation*}
d\bar{\eta}^L=\Psi \circ (\bar{F}^L \circ \pi)^* d\bar{\eta}'^L
\end{equation*}
Recall that $L$ is the torus orbit of one copy of a $\C P^{n-2}$ on the $q$-coordinates, thus $(L \slash T, g_0^T)$ is isometric to $(\C P^{n-2}, (1-2a^2)g_{FS})$.  Hence, $\bar{F}^L$ corresponds to a map $A \in Isom(\C P^{n-2}, g_{FS})=SU(n-1) \cup SU(n-1) \circ Q$, and $(\bar{F}^L \circ \pi )[q,r,s]=\pi[Aq, r,s]$ for all $[q,r,s] \in L$. Recall that $Q$ denotes complex conjugation. We write \\
\begin{tabular}{ccccl}
$ B:$ & $L$ & $\ra$ & L &\\
& $[q,r,s]$ & $ \mapsto$ & $[ Aq,r,s ]  $& if  $A \in SU(n-1)$ \\
& $[q,r,s]$ & $ \mapsto$ & $[ A\bar{q},\bar{r},\bar{s} ]$ & if $A \in SU(n-1)\circ Q $\\
\end{tabular} \\
It is easy to check that $B$ is well-defined. 
Then, by $\bar{F}^L \circ \pi = \pi \circ B$, we get \\
$(H) \qquad d\bar{\eta}^L=\Psi \circ B^*(d\bar{\eta}'^L)$ \\
The next step will be to use the formula for $d\bar\eta^L$ calculated in remark \ref{restriction} and show that this equation can only be fulfilled if $j$ and $j'$ are equivalent. \\
Before doing so, we will introduce a simplification. We will restrict our attention to tangent vectors $[X], [Y] \in T_{[q,r,s]}L$ with $\langle X_q, iq \rangle = 0 = \langle Y_q, iq \rangle$ for the Hopf-horizontal lifts $X,Y \in T_{(q,r,s)}S^{2n+1}$. For such $[X],[Y]$ the formula for $d\bar{\eta}$ from remark \ref{restriction} simplifies to 
\begin{equation*}
d\bar{\eta}^k([X],[Y])_{[q,r,s]}=2|q|^2 \langle j_{Z_k}X_q, Y_q \rangle 
- 2 \langle j_{Z_k}q,iq \rangle \langle iX_q, Y_q \rangle
\end{equation*}
Note that, no matter whether $A\in SU(n-1)$ or $A\in SU(n-1) \circ Q$, the condition $\langle X_q, iq \rangle = 0$ implies $\langle AX_q, iAq \rangle = \pm \langle AX_q, Aiq \rangle = \pm \langle X_q, iq \rangle = 0$, and similarly for $Y$. \\
Thus, the above simplified formula for $d\bar{\eta}^k([X],[Y])$ will also apply to $B_*[X]=(AX_q, \hdots )$ and $B_*[Y]=(AY_q, \hdots )$ in $T_{B[q,r,s]}L$.\\
As a last preparation, note moreover that for any $\mfz$-valued 1-form $\alpha$ on $L$ we have
\begin{equation*}
(\Psi \circ \alpha)^k=\langle \Psi(\alpha(.)), Z_k \rangle = \langle \alpha (.), ^t\Psi(Z_k) \rangle
\end{equation*}
for $k=1,2$, where $\langle , \rangle$ denotes the inner product with orthogonal basis $\{Z_1, Z_2 \}$ on $\mfz$, and $^t \Psi$ denotes the adjoint of $\Psi$ with respect to this inner product. \\
Thus, equation $(H)$ implies for all $[X], [Y] \in T_{[q,r,s]}L$ whose Hopf horizontal lifts to $T_{(q,r,s)}S^{2n+1}$ satisfy $\langle X_q, iq \rangle = 0= \langle Y_q, iq \rangle$ that we have
\begin{eqnarray*}
0= & 2|q|^2 \langle j_{Z_k}X_q, Y_q \rangle - 2 \langle j_{Z_k}q,iq \rangle \langle iX_q, Y_q \rangle \\
& - 2|Aq|^2 \langle j'_{^t\Psi(Z_k)}AX_q, AY_q \rangle + 2 \langle j'_{^t\Psi(Z_k)}Aq,iAq \rangle \langle iAX_q, AY_q \rangle \\
0= &|q|^2 \langle j_{Z_k}X_q, Y_q \rangle - |q|^2 \langle A^{-1}j'_{^t\Psi(Z_k)}AX_q, Y_q \rangle \\
& -  \langle j_{Z_k}q,iq \rangle \langle iX_q, Y_q \rangle 
+ \langle A^{-1}j'_{^t\Psi(Z_k)}Aq,iq \rangle \langle iX_q, Y_q \rangle\\
\end{eqnarray*}
because $A$ either commutes or anticommutes with $i$. \\
Letting $\nu^k := j_{Z_k}-A^{-1}j'_{^t \Psi(Z_k)} A$, we get
\begin{equation*}
0=|q|^2\langle \nu^k X_q, Y_q \rangle - \langle \nu^k q, iq \rangle \langle iX_q, Y_q \rangle
\end{equation*}
This equation holds for all $q \in S^{2n-3}_{\sqrt{1-2a^2}}$ and all $X_q, Y_q \in \C^{n-2}$ with 
$X_q, Y_q \perp span \{q, iq\}$, because all such $X_q, Y_q$ occur in Hopf-horizontal lifts of the form $(X_q, 0,0), (Y_q, 0,0) \in T_{(q,r,s)}S^{2n-1}$ of tangent vectors in $T_{[q,r,s]}L$. Note that our $n$ is at least 4 by section 2, so $n-2 \ge 2$, thus nonzero $X_q \perp span \{q, iq\}$ do exist. \\
In the particular case $Y_q = iX_q$, we get \\
$(\triangle) \qquad \frac{\langle i\nu^k X_q, X_q \rangle}{|X_q|^2} = \frac{\langle i\nu^k q, q \rangle}{|q|^2} $\\
for all $q \in S^{2n-3}_{\sqrt{1-2a^2}}$ and all $X_q \perp span\{q, iq\}$ in $\C^{n-2}$. Applying $(\triangle)$ to elements of an orthonormal basis of eigenvectors for the hermitian map $i\nu^k$, we see that all eigenvalues of $i\nu^k$ have to be equal.
Thus $i\nu^k$ and $\nu^k$ have to be scalar multiples of the identity. By definition $\nu^k \in \mfs \mfu (n-1)$, so $\nu^k$ has trace zero. This implies $\nu^k =0$ and we have shown that $j$ and $j'$ are equivalent. \\
Thus we have shown that if $j$ and $j'$ are not equivalent, then condition (N) is fulfilled. \\

\item 2. We will again use an indirect approach. Suppose there were a non trivial 1-parameter group 
$\bar{F}_t \subset \overline{Aut}_{g_0}^T(M)$ with $\bar F_t^* \Omega_{\bar\eta} = \Omega_{\bar\eta}$. We will again restrict our attention to one of the submanifolds $L$. There we get $\bar{F}_t^* \Omega^L_{\bar\eta} = \Omega^L_{\bar\eta}$. Using the same arguments as in the first part with $\Psi=id$ and $\bar{\eta}=\bar{\eta}'$, we get $j_Z=A_t^{-1}j_ZA_t$ for some nontrivial 1-parameter group $A_t \subset SU(n-1)$. This contradicts the genericity assumption
made on $j_Z$. \\
\end{proof}

\pagebreak
\section{Real projective space}
%********************************************************************************

We will present here isospectral metrics on real projective space. One can see a real projective space as a sphere with opposite points identified, using this construction the metrics on the sphere carry over directly to real projective space. We will at first present the key data of the isospectral metrics on the sphere constructed by Schueth in \cite{Schueth01b} and then show that everything is compatible with factoring by the antipodal map. \\

\begin{notation}
Let the sphere $S^{2m+1}$ be embedded in $\C^{m+1}=\C^m \oplus \C$. Denote a point on the sphere by $(p,q)$. Let the torus $T$ act on the sphere by
\begin{equation*}
exp(aZ_1+bZ_2) : (p,q) \mapsto (e^{ia}p,e^{ib}q)
\end{equation*}
for all $a,b \in \R$. Here $Z_1, Z_2$ denotes again the basis of $\mfz$, the Lie algebra of the torus $T$.
\end{notation}

\begin{notation}
\label{rpn}
Let $j(t) : \mfz \ra \mfs \mfu (m)$, again $m \ge 3$, be a family of linear maps that are isospectral, non equivalent and generic as in definition \ref{defj}. We then define the associated $\mfz$-valued 1-forms $\lambda(t)$ on $\C^m \oplus \C$ by 
\begin{equation*}
\lambda^k_{(p,q)}(X,U) := |p|^2 \langle j_{Z_k}p,X \rangle - \langle X,ip \rangle \langle j_{Z_k}p, ip \rangle \quad \textit{for } k=1,2
\end{equation*}
where $(p,q)$ is a point in $\C^m \oplus \C$ and $(X,U)$ a vector in the tangent space $T_p\C^m \oplus T_q\C$. Note the similarity to the 1-form $\eta$ defined in proposition \ref{defeta}. This stems from the fact that the 1-forms were orthogonalized to $ip$ in both cases. 
These 1-forms are restricted to the sphere, they are admissible as in definition \ref{admissible} and the associated metrics 
\begin{equation*}
g_{\lambda}(X,Y) := g_0(X + \lambda(X)^*, Y + \lambda(Y)^*)
\end{equation*}
are isospectral and non isometric (see proposition 3.2.5 and 4.3 in \cite{Schueth01b} for the proof).
\end{notation}

\begin{proposition}
The isospectral metrics on the sphere induce isospectral metrics on real projective space in a canonical way. In fact we have
\begin{equation*}
\lambda^k_{(p,q)}(X,U) = \lambda^k_{(-p,-q)}(-X,-U)
\end{equation*}
thus $\lambda$ is invariant under the antipodal map, hence induces a $\mfz$-valued 1-form $\bar{\lambda}$ on $\R P^{2m+1}$. With respect to the accordingly defined metric $g_{\bar{\lambda}}$ on $\R P^{2m+1}$, the projection $(S^{2m+1}, g_{\lambda}) \ra (\R P^{2m+1}, g_{\bar{\lambda}})$ is a Riemannian covering.
\end{proposition}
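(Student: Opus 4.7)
The plan is to reduce to theorem \ref{isomlambda} applied on $\R P^{2m+1}$, after verifying that the entire sphere construction is $\sigma$-equivariant for the antipodal map $\sigma:(p,q)\mapsto(-p,-q)$. Three compatibility checks are needed: that $\lambda$ descends, that $g_\lambda$ descends, and that the intertwining isometries $F_\mu$ descend.

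For $\lambda$, the identity $\lambda^k_{(p,q)}(X,U) = \lambda^k_{(-p,-q)}(-X,-U)$ is a direct substitution into the formula from notation \ref{rpn}: linearity of $j_{Z_k}$ gives $j_{Z_k}(-p) = -j_{Z_k}p$, and each of the two summands is a product of a $p$-quadratic factor with a bilinear pairing in which the two sign flips cancel. Since $d\sigma_{(p,q)}(X,U)=(-X,-U)$, this reads $\sigma^*\lambda = \lambda$, so $\lambda$ descends to a well-defined $\mfz$-valued 1-form $\bar\lambda$ on $\R P^{2m+1}$ by setting $\bar\lambda([X])_{[p,q]} := \lambda(\tilde X)_{(p,q)}$ for any lift.

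For $g_\lambda$, I would observe that the fundamental vector fields satisfy $(Z_k^*)_{\sigma(p,q)} = d\sigma_{(p,q)}(Z_k^*)_{(p,q)}$, which follows from $Z_1^*=(ip,0)$ and $Z_2^*=(0,iq)$ being linear in the base point. Combined with $\sigma^*\lambda = \lambda$, this makes $X \mapsto \lambda(X)^*$ $\sigma$-equivariant, so $X + \lambda(X)^*$ transforms as a genuine tangent vector under $d\sigma$; the evident $\sigma$-invariance of $g_0$ then gives $\sigma^*g_\lambda = g_\lambda$ directly from the defining formula in definition \ref{admissible}. Hence $g_\lambda$ descends to a metric $g_{\bar\lambda}$ on $\R P^{2m+1}$, and the projection is a local isometry with $\Z/2$ fibres, i.e.\ a two-sheeted Riemannian covering.

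To get the isospectrality asserted in the first sentence of the proposition, I would invoke theorem \ref{isomlambda} directly on $\R P^{2m+1}$. The antipodal map equals $\exp(\pi Z_1 + \pi Z_2) \in T$, so the torus action descends to an (almost-)effective isometric action by a quotient torus sharing the Lie algebra $\mfz$. Admissibility of $\bar\lambda$ transfers from $\lambda$ via the local isometry, and the intertwining isometries $F_\mu$ constructed on the sphere lie in $SU(m)\times\{\mathrm{id}\} \subset SU(m+1)$, hence commute with $-\mathrm{id}$ and descend to isometries $\bar F_\mu$ of $\R P^{2m+1}$ satisfying $\mu\circ\bar\lambda = \bar F_\mu^*(\mu\circ\bar\lambda')$. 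The only bookkeeping subtlety is this collapse of $\{\pm\mathrm{id}\} \subset T$ in the quotient: one must verify that the effective torus on $\R P^{2m+1}$ still has Lie algebra $\mfz$, so that every $\mfz$-valued object from notation \ref{rpn} remains meaningful on the quotient. Apart from this, every step is a routine sign-cancellation or diagram chase through the double cover.
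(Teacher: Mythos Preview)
Your proof is correct and subsumes the paper's. The paper's own argument is a single sentence: it observes that substituting into the formula from notation \ref{rpn} makes all minus signs cancel, which establishes the displayed identity; the descent of $\lambda$ and the Riemannian-covering statement are then taken as immediate formal consequences, and the isospectrality claim in the first sentence is not argued at all.

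You supply what the paper leaves implicit. Your first paragraph is exactly the paper's proof. Your second and third paragraphs---checking that $g_\lambda$ is $\sigma$-invariant via the equivariance of $Z_k^*$, and that the intertwining isometries $F_\mu\in SU(m)\times\{\mathrm{id}\}$ commute with $-\mathrm{id}$ and hence descend so that theorem \ref{isomlambda} applies on the quotient---are the natural completion of the argument and justify the word ``isospectral'' in the proposition's opening sentence. The bookkeeping remark about the effective torus on $\R P^{2m+1}$ is apt: $-\mathrm{id}=\exp(\pi Z_1+\pi Z_2)$ lies in $T$, so the quotient torus is $T/\{\pm\mathrm{id}\}$, still two-dimensional with Lie algebra $\mfz$, and theorem \ref{isomlambda} goes through unchanged. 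None of this is a different route; it is the same route, walked more carefully.
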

\begin{proof}
This can be seen by applying the definition given in notation \ref{rpn}, all the minus signs cancel out.
\end{proof}

It is possible to imitate the nonisometry proof for metrics on spheres in \cite{Schueth01b} to obtain a nonisometry proof for the isospectral metrics on $\R P^{2m+1}$.

\begin{remark}
The same observation applies to the pair of isospectral metrics on $S^5$ constructed in \cite{Schueth01b} as well as to the isospectral metrics on spheres constructed by Carolyn Gordon in \cite{Gordon01}, all induce isospectral metrics on real projective space.
\end{remark}

\pagebreak
%\printindex
\bibliographystyle{amsalpha}
% \appendix
\addcontentsline{toc}{section}{Literature}
\bibliography{literatur}

\providecommand{\bysame}{\leavevmode\hbox to3em{\hrulefill}\thinspace}
\providecommand{\MR}{\relax\ifhmode\unskip\space\fi MR }
% \MRhref is called by the amsart/book/proc definition of \MR.
\providecommand{\MRhref}[2]{%
  \href{http://www.ams.org/mathscinet-getitem?mr=#1}{#2}
}
\providecommand{\href}[2]{#2}
\begin{thebibliography}{BGM71}

\bibitem[Arv03]{Arvanitoyeorgos03}
Andreas Arvanitoyeorgos, \emph{An introduction to lie groups and the geometry
  of homogeneous spaces}, vol.~22, American Mathematical Society, 2003.

\bibitem[B{\'e}r86]{Berard86}
Pierre~H. B{\'e}rard, \emph{Spectral geometry: direct and inverse problems},
  Monograf\'\i as de Matem\'atica [Mathematical Monographs], vol.~41, Instituto
  de Matem\'atica Pura e Aplicada (IMPA), Rio de Janeiro, 1986, With appendices
  by G\'erard Besson, B\'erard and Marcel Berger.

\bibitem[BGM71]{BGM71}
Marcel Berger, Paul Gauduchon, and Edmond Mazet, \emph{Le spectre d'une
  vari\'et\'e riemannienne}, Lecture Notes in Mathematics, Vol. 194,
  Springer-Verlag, Berlin, 1971.

\bibitem[Bus92]{Buser92}
Peter Buser, \emph{Geometry and spectra of compact {R}iemann surfaces},
  Progress in Mathematics, vol. 106, Birkh\"auser Boston Inc., Boston, MA,
  1992.

\bibitem[Cha84]{Chavel84}
Isaac Chavel, \emph{Eigenvalues in {R}iemannian geometry}, Pure and Applied
  Mathematics, vol. 115, Academic Press Inc., Orlando, FL, 1984, Including a
  chapter by Burton Randol, With an appendix by Jozef Dodziuk.

\bibitem[Cox74]{coxeter74}
H.~S.~M. Coxeter, \emph{Projective geometry}, Springer-Verlag, Berlin, 1974.

\bibitem[dC92]{Docarmo92}
Manfredo Perdig\~ao do~Carmo, \emph{Riemannian geometry}, Mathematics: Theory
  and Applications, Birkh\"auser, Boston, MA, 1992.

\bibitem[Gil95]{Gilkey95}
Peter~B. Gilkey, \emph{Invariance theory, the heat equation, and the
  {A}tiyah-{S}inger index theorem}, second ed., Studies in Advanced
  Mathematics, CRC Press, Boca Raton, FL, 1995.

\bibitem[Gor01]{Gordon01}
Carolyn~S. Gordon, \emph{Isospectral deformations of metrics on spheres},
  Invent. Math. \textbf{145} (2001), no.~2, 317--331.

\bibitem[Hat02]{Hatcher02}
Allen Hatcher, \emph{Algebraic topology}, second ed., Cambridge University
  Press, Cambridge, UK, 2002.

\bibitem[Kar89]{Karcher89}
Hermann Karcher, \emph{Riemannian comparison constructions}, Global
  differential geometry \textbf{27} (1989), 170--222.

\bibitem[Mil64]{Milnor64}
J.~Milnor, \emph{Eigenvalues of the {L}aplace operator on certain manifolds},
  Proc. Nat. Acad. Sci. U.S.A. \textbf{51} (1964), 542.

\bibitem[Sch99]{Schueth99}
D.~Schueth, \emph{Continuous families of isospectral metrics on simply
  connected manifolds}, Ann. of Math. \textbf{149} (1999), 287--308.

\bibitem[Sch01a]{Schueth01a}
\bysame, \emph{Isospectral manifolds with different local geometries}, J. Reine
  Angew. Math. \textbf{534} (2001), 41--94.

\bibitem[Sch01b]{Schueth01b}
\bysame, \emph{Isospectral metrics on five-dimensional spheres}, J.
  Differential Geometry \textbf{58} (2001), 87--111.

\bibitem[Sch03]{Schueth03}
\bysame, \emph{Constructing isospectral metrics via principal connections},
  Geometric Analysis and Nonlinear Partial Differential Equations (2003),
  69--79.

\end{thebibliography}
% \begin{thebibliography}{A}
% \end{thebibliography}

\end{document}